\pgfplotsset{compat=1.10}
\theoremstyle{plain}
\newtheorem{thm}{Theorem}[section]
\newtheorem{prop}[thm]{Proposition}
\newtheorem{lem}[thm]{Lemma}
\newtheorem{cor}[thm]{Corollary}
\theoremstyle{definition}
\newtheorem{defi}[thm]{Definition}
\newtheorem{ex}[thm]{Example}
\newtheorem{rem}[thm]{Remark}
\numberwithin{equation}{section}
\def\C{\mathbb C}
\def\R{\mathbb R}
\def\M{\mathcal M}
\def\Z{\mathbb Z}
\def\Q{\mathbb Q}
\def\ga{\Gamma}
\def\m{\mathbf m}
\def\t{\mathbf t}
\def\z{\mathbf z}
\def\x{\mathbf x}
\def\y{\mathbf y}
\def\p{\mathbf p}
\def\lora{\longrightarrow}
\def\norm{|\!| \cdot |\!|}
\def\mnote{\marginnote}
\DeclareMathOperator{\codim}{codim}
\DeclareMathOperator{\orb}{orb}
\begin{document}

\title[Toric varieties and polyhedral horofunction compactifications]{Toric varieties vs. horofunction compactifications of polyhedral norms}

\author{Lizhen Ji}
\thanks{The first author acknowledges support from NSF  grants DMS 1107452, 1107263, 1107367 GEometric structures And Representation varieties (the GEAR Network)
and partial support from Simons Fellowship (grant \#305526) and the Simons grant  \#353785. }
\author{Anna-Sofie Schilling}
\thanks{The second author was partially supported by the European Research Council under ERC-Consolidator grant 614733, and by the German Research Foundation in the RTG 2229 Asymptotic Invariants and Limits of Groups and Spaces.}

\date{\today}

%\date{\today}

\begin{abstract}
      We establish a natural and geometric 1-1 correspondence between projective toric varieties of dimension $n$ and horofunction compactifications of $\R^n$ with respect to rational polyhedral norms. For this purpose, we explain a topological model of toric varieties. Consequently, toric varieties in algebraic geometry, normed spaces in convex analysis, and horofunction compactifications in metric geometry are directly and explicitly related.
\end{abstract}

\maketitle

\tableofcontents

\section{Introduction}

In this paper we give a correspondence between the three seemingly different concepts \emph{toric varieties}, \emph{horofunction compactifications} and \emph{polyhedral norms}.  Toric varieties provide a basic class of algebraic varieties which are relatively simple. The nonnegative part and the moment map of toric varieties are essential ingredients of the rich structure of toric varieties. The horofunction compactification of metric spaces is a general method to construct compactifications of metric spaces introduced by Gromov \cite[\S 1.2]{gr} in 1981 (see \S \ref{horo-section} below). Finally, polyhedral norms on $\R^n$ give a special class of  normed linear spaces (or Minkowski spaces \cite{th}) and metric spaces (see \S \ref{poly} below).

In this paper we establish an explicit  geometric connection between projective toric
varieties of dimension $n$ and horofunction compactifications of
 $\R^n$ with respect to rational polyhedral norms. 
\begin{thm}\label{main} 
      In every dimension $n\geq 1$, there exists a bijective correspondence between projective toric  varieties $X$ of dimension $n$ and rational polyhedral norms $\norm$  on $\R^n$ up to scaling such that:
      \begin{enumerate}
	  \item The nonnegative part $X_{\geq 0}$ of a projective toric variety $X$ is homeomorphic to the horofunction compactification $\overline{\R^n}^{hor}$  of $\R^n$ with respect to the distance induced by the corresponding polyhedral norm $\norm$.
	  \item Equivalently, the image of the moment map  of the toric variety $X$ is homeomorphic to the horofunction compactification $\overline{\R^n}^{hor}$ of $\R^n$ with respect to the distance induced by the corresponding polyhedral norm $\norm$.
      \end{enumerate}
\end{thm}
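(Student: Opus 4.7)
The plan is to route both constructions through the same rational polytope $P \subset \R^n$: on the toric side, $P$ will be the moment polytope of $X$; on the normed-space side, $P$ will be the polar dual of the unit ball of $\|\cdot\|$. First, I would reduce the equivalence of statements (1) and (2) to the classical fact in toric geometry that the moment map restricts to a homeomorphism from $X_{\geq 0}$ onto the moment polytope. Given this, both (1) and (2) become the single assertion that a certain rational polytope is homeomorphic to $\overline{\R^n}^{hor}$. Second, I would set up the correspondence between polyhedral norms (up to scaling) and projective toric varieties: given a rational polyhedral norm $\|\cdot\|$ with unit ball $B$, its polar dual $B^*$ is a rational polytope containing $0$ in its interior; after clearing denominators, it becomes a lattice polytope and determines a unique polarized projective toric variety $X$ of dimension $n$. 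In the reverse direction, from the lattice polytope defining $X$ one recovers a rational polyhedral norm (up to scaling) through its dual unit ball data, and scaling on one side matches scaling on the other.

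The analytic heart of the proof is identifying $\overline{\R^n}^{hor}$ with $B^*$. I would invoke an explicit asymptotic description of horofunctions for polyhedral normed spaces: for an unbounded sequence $y_t \in \R^n$, the horofunction limit $h_y(x) = \lim_t(\|x - y_t\| - \|y_t\|)$ exists exactly when $y_t/\|y_t\|$ asymptotically concentrates on the relative interior of a single face $F$ of $B$, and $h_y$ is then parameterized by the face of $B^*$ dual to $F$ together with a secondary parameter ranging over the relative interior of that dual face. Thus horofunctions are naturally indexed by points of $B^*$, with interior points corresponding to translations in $\R^n$ itself (the open stratum) and proper faces parameterizing boundary strata. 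The matching with $X_{\geq 0}$ on the toric side uses the topological model of projective toric varieties developed earlier in the paper, which presents $X_{\geq 0}$ as a union of cells indexed by faces of the moment polytope and glued in the same combinatorial pattern as the faces of $B^*$.

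The hardest step will be upgrading this stratum-by-stratum bijection to a genuine homeomorphism. A set-theoretic bijection is not enough: I must verify that a sequence in $\R^n$ converges to a given boundary horofunction precisely when the associated sequence in $B^*$ converges to the corresponding point, and that the passage between adjacent strata is consistent on both sides. This is a careful comparison between the topology of uniform convergence on compacta (on the horofunction side) and the subspace topology of $B^*$ (on the polytope side), and it is what prevents the result from being a formal consequence of the combinatorial data alone. Once this compatibility is established, bijectivity and naturality of the overall correspondence follow formally from the constructions laid out in the first paragraph.
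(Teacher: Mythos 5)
Your proposal is correct in its essential outline and shares the key ingredients with the paper's proof, but it takes a noticeably more indirect route. The paper proves (1) directly by comparing two bordifications of $\R^n$ on the same underlying point set $\R^n\cup\coprod_\sigma O_\R(\sigma)$: Proposition \ref{real-orbit} identifies $X_{\Sigma,\geq 0}$ with a space whose convergence criterion is phrased in terms of decompositions $\x_j=\x_j'+\x_j''$ with $\x_j'$ deep in the cone $\sigma_F$ over a face $F$ of $P$, and Proposition \ref{horo} gives an identical convergence criterion (via the orthogonal projection $\Pi_F$) for $\overline{\R^n}^{hor}$. Since the two topologies on the same set coincide, the homeomorphism is the identity on $\R^n$ and requires no further construction; part (2) then follows from the moment map fact as a restatement, not as an ingredient. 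You instead route both compactifications through the bounded polytope $B^*=P^\circ$, using the moment map homeomorphism $X_{\geq 0}\cong P^\circ$ on the toric side and the separate identification $\overline{\R^n}^{hor}\cong P^\circ$ (the paper's unnumbered corollary, cited from \cite[Theorem 1.2]{js}) on the metric side. That works, but it forces you to confront the step you correctly flag as hardest: the boundary strata of $\overline{\R^n}^{hor}$ are unbounded affine quotients $\R^n/V(F)$, while the strata of $P^\circ$ are bounded open faces, so you need an explicit map collapsing each $\R^n/V(F)$ onto the relative interior of the dual face $E=F^\circ$ and must verify continuity across adjacent strata. The paper's choice of comparison sidesteps this entirely because it never leaves the bordification model; your route is the content of the corollary rather than the theorem. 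One small notational caution: when you say the lattice polytope $B^*$ ``determines a unique polarized projective toric variety,'' keep in mind that the theorem's bijection is with (unpolarized) projective toric varieties, so you must quotient by the polarization data; this is exactly what ``up to scaling'' on the norm side absorbs, and it is cleaner to phrase the bijection via the fan $\Sigma_P$ over the faces of the unit ball, as the paper does, since that depends only on the scaling class of $P$.
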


This correspondence  is canonical and  given as follows: The unit ball of a rational polyhedral norm $\norm$ is a rational convex polytope $P$ in $\R^n$ which contains
the origin as an interior point, which in turn gives a fan $\Sigma=\Sigma_P$  in $\R^n$ by taking cones over the faces of $P$, and hence gives a toric variety $X=X_\Sigma$. Note that the fan $\Sigma_P$ does not change when the polytope $P$ is scaled, and hence the correspondence is up to scaling on the polyhedral norms $|\!| \cdot |\!|$. 

This result adds another perspective on the close relations between integral convex polytopes and toric projective varieties, for a detailed description see \cite[Chap 2]{od1}. 

Theorem \ref{main} implies the following

\begin{cor}
      Let $|\!| \cdot |\!|$ be a rational polyhedral norm on $\R^n$, and $P$ its unit ball. Let  $P^\circ$ be the polar set\footnote{For a definition see Equation \ref{polar} on page \pageref{polar}.} of $P$, a polytope dual to $P$. Then the horofunction compactification  of $\R^n$ with respect to $|\!| \cdot |\!|$ is homeomorphic  $P^\circ$.
\end{cor}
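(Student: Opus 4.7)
The plan is to reduce the corollary to Theorem \ref{main}(2) combined with the standard identification, in toric geometry, of the image of the moment map of a projective toric variety with a polytope. By Theorem \ref{main}(2), $\overline{\R^n}^{hor}$ is homeomorphic to the image of the moment map of the toric variety $X_{\Sigma_P}$, where $\Sigma_P$ is the fan obtained by taking cones over the proper faces of the unit ball $P$. Thus it suffices to identify this moment map image with $P^\circ$.

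The next step is polar duality from convex geometry: when $P \subset \R^n$ is a convex polytope containing the origin in its interior, the fan $\Sigma_P$ whose cones are spanned by the proper faces of $P$ (the \emph{face fan} of $P$) coincides with the \emph{normal fan} of the polar polytope $P^\circ$. This rests on the inclusion-reversing bijection between faces of $P$ and faces of $P^\circ$, together with the observation that the vertices of $P$, which are the primitive generators of the rays of $\Sigma_P$, are precisely the outer facet normals of $P^\circ$. Consequently $X_{\Sigma_P}$ is the projective toric variety canonically associated with the lattice polytope $P^\circ$.

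Finally, I would invoke the classical theorem on the moment map for projective toric varieties (see e.g.\ \cite[Chap 2]{od1}): for a toric variety $X_\Sigma$ whose fan $\Sigma$ is the normal fan of a lattice polytope $Q$, the image of the moment map is homeomorphic to $Q$. Applied with $Q = P^\circ$, this yields the desired homeomorphism between $\overline{\R^n}^{hor}$ and $P^\circ$.

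The main obstacle is bookkeeping rather than mathematical depth: one has to match the conventions for polar duality, normal fans, and the moment map so that the polytope recovered is exactly $P^\circ$ (and not an affinely equivalent copy arising from a different scaling of the polarization). Since the corollary only asserts a homeomorphism, any such ambiguity is absorbed harmlessly into the statement.
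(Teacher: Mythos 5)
Your proposal is correct and follows essentially the same route as the paper: Theorem \ref{main}(2) reduces the claim to identifying the moment map image of $X_{\Sigma_P}$ with $P^\circ$, and the paper does exactly this via its Proposition on the moment map $\mu\colon X_{\Sigma_P,\geq 0}\to P^\circ$, whose proof (as you note) rests on the fact that the face fan $\Sigma_P$ is the normal fan of $P^\circ$. You simply unpack the intermediate normal-fan step that the paper cites to the literature.
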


This gives a bounded realization of the horofunction compactification of $\R^n$.
The same result holds for the horofunction compactification of any polyhederal norm
on $\R^n$ whether it is rational or not
and is proven in \cite[Theorem 1.2]{js}.

It is well-known that algebro-geometric and cohomology properties of toric varieties $X_\Sigma$ are determined by combinatorial and convex properties 
of their fans $\Sigma$ (see \cite{fu} \cite{cls} \cite{od1}).
Consequently, the existence of a correspondence between toric varieties and polyhedral norms is not surprising.
But it is probably not obvious that there exists such a direct
connection between  {\em  horofunction compactifications of $\R^n$  in metric geometry} and   {\em 
 important parts of toric varieties} $X_\Sigma$: the nonnegative part $X_{\Sigma, \geq 0}$ and the image of the moment map of $X_\Sigma$.

The correspondence in Theorem \ref{main} can give numerical invariants of toric varieties. 
In the statement of Theorem \ref{main}, we have fixed the standard  integral structure
$\Z^n$  of $\R^n$ when we discuss toric varieties and the {\em  rationality} of polyhedral norms.
Consequently, by requiring the standard basis of $\Z^n$ to be unit vectors,
we can also fix the standard Euclidean metric on $\R^n$. Though scaling of the polyhedral 
norm $|\!| \cdot |\!|$ does change its unit ball, i.e., the polytope,  it does not change 
the fan $\Sigma_P$ induced from $P$.
On the other hand, we can use the following {\em canonical
normalization} of polyhedral norms: Every vertex of the unit ball of  $|\!| \cdot |\!|$  is integral, and one of them
is primitive. 

With this normalization,  by Theorem \ref{main},
 each projective toric variety $X_\Sigma$ gives 
a unique polyhedral norm $|\!| \cdot |\!|$ and its unit ball $P$, which is a polytope in $\R^n$.
Besides computing the volume of $P$ with respect to the standard Euclidean metric on $\R^n$, 
we can also compute the volume of $P$ with respect to
a suitable notion of  volume induced from the norm  $|\!| \cdot |\!|$.
According to \cite{at},
there are four commonly used definitions of volumes on normed 
vector spaces: Busemann volume, Holmes-Thompson volume,
Gromov volume, and Benson volume.
Consequently, we obtain the following corollary:
\begin{cor}
      Choose one of the four volumes mentioned above, for each projective toric variety $X$, there is a canonical number given by the volume of the unit ball $P$ of the normalized polyhedral norm corresponding to the toric variety $X$.
\end{cor}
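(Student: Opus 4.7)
The plan is to string together three well-defined constructions: the canonical bijection of Theorem \ref{main}, the normalization of the polyhedral norm described in the paragraph preceding the corollary, and the chosen volume functional on the resulting normed space.

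First, applying Theorem \ref{main} to the projective toric variety $X$ yields a rational polyhedral norm $\norm$ on $\R^n$, well-defined only up to a positive scalar multiple. The normalization condition --- every vertex of the unit ball is integral, and at least one of them is primitive --- then selects a unique representative in this scaling equivalence class, together with its unit ball $P \subset \R^n$. This uniqueness has already been recorded in the discussion preceding the corollary: since the polyhedral norm is rational and the standard integral structure $\Z^n$ has been fixed, there is a single scaling under which every vertex sits on $\Z^n$ and at least one vertex is a primitive lattice vector.

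Second, each of the four volume notions (Busemann, Holmes-Thompson, Gromov, Benson) is constructed intrinsically from the data of a finite-dimensional normed space, so it depends only on the norm itself and not on any auxiliary choice. Applying the chosen volume functional to the uniquely determined unit ball $P$ therefore produces a positive real number depending only on $X$, giving the claimed canonical invariant.

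The most delicate point to verify is the existence and uniqueness of the canonical normalization for every projective toric variety: that is, the polyhedral norm produced by Theorem \ref{main} always admits a scaling of the required form and this scaling is unique. This reduces to a statement about the denominators of the vertices of $P$ relative to the primitive generators of the rays of its fan $\Sigma_P$, and can be checked directly using the explicit $P \mapsto \Sigma_P \mapsto X_{\Sigma_P}$ correspondence described in the paragraph after Theorem \ref{main}. Once this is in hand, the corollary follows by simply unwinding the constructions.
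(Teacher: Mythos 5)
You correctly isolate the crux --- that the ``canonical normalization'' must select a \emph{unique} unit ball $P$ from the data of $X$ --- but you defer it with ``can be checked directly,'' and in fact it does not check out. The fundamental obstruction is that the assignment $P \mapsto \Sigma_P$ is far from injective even after dividing by scaling: the fan $\Sigma_P$ only records the rays through the vertices of $P$ and the face lattice, not the relative heights of the vertices along their rays. For example, in $\R^2$ the square $P_1$ with vertices $(\pm 1,\pm 1)$ and the quadrilateral $P_2$ with vertices $(2,2)$, $(1,-1)$, $(-1,1)$, $(-1,-1)$ (which contains the origin in its interior) span the same fan --- the four rays through $(\pm 1,\pm 1)$ together with the four two-dimensional cones between them --- and hence yield the same projective toric variety. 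Both already satisfy the stated normalization (all vertices integral, and $(1,1)$ resp.\ $(1,-1)$ primitive), yet they are not scalar multiples of one another: $P_1$ has Euclidean area $4$, $P_2$ has area $6$, and the Holmes--Thompson, Gromov and Benson volumes of the unit ball distinguish them likewise (only the Busemann volume of a unit ball is trivially constant). So the prescription does not produce a well-defined invariant of $X$, and no argument about ``denominators of the vertices relative to the primitive ray generators'' can save it, since the ambiguity is not a scaling ambiguity.

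There is also a secondary gap in the part you do flag: even within a single scaling class the normalization need not exist. Writing each vertex of $P$ as $\mu_i u_i$ with $u_i$ the primitive generator of its ray, integrality of $\lambda P$ forces $\lambda\mu_i \in \Z_{>0}$ for all $i$, and primitivity of some scaled vertex forces $\lambda\mu_j=1$; this is possible iff some $\mu_j$ divides every $\mu_i$. For the triangle with vertices $(1/2,0)$, $(0,1/3)$, $(-1,-1)$ this fails: every integral rescaling has vertices $(3m,0)$, $(0,2m)$, $(-6m,-6m)$, none of which is ever primitive. A genuine repair of the corollary would be to strengthen the normalization to require \emph{every} vertex of $P$ to be a primitive lattice vector, i.e.\ to take $P$ to be the convex hull of the primitive ray generators of $\Sigma$, when that convex hull indeed has fan $\Sigma$; only then is $P$, and hence its volume, canonically determined by $X$.
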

One natural question is the meaning of such volumes for toric varieties.
Note that if we use the standard Euclidean metric on $\R^n$,
then the volume of the convex polytope $P$  is related to the implicit degree 
of  the projective toric variety $X_{\Sigma_P}$. See \cite[\S 5]{so}.

 The correspondence in Theorem \ref{main} also raises the question of how to understand toric varieties by using metric geometry.

\vspace{.1in}
\noindent{\bf Horofunction compactification and noncommutative geometry}
\vspace{.1in}

Before we explain some detailed definitions of toric varieties and
horofunction compactifications in later sections,
we point out a connection between horofunction compactifications of normed vector spaces
and reduced $C^\star$-algebras of discrete groups and consequently the noncommutative geometry.

After the horofunction compactification of a proper metric space was
introduced by Gromov \cite[\S 1.2]{gr} in 1981,
the horofunction compactification of a complete simply
connected nonpositively curved manifold was identified
with the geodesic compactification in \cite[\S 3]{bgs}. 
This gives a direct connection between the geometry of geodesics and
analysis, or rather a class of special functions, on the manifold.  
Among nonpositively curved simply connected
Riemannian manifolds, horofunctions are difficult to compute except
for symmetric spaces of noncompact type (see \cite{ha} \cite{gjt}).  
For noncompact locally symmetric spaces of nonpositive curvature,
the horofunction compactification was identified in \cite{jm} and \cite{dfs}.
It will be seen below that 
$\R^n$ with polyhedral norms provide another class of spaces for which all horofunctions can be computed \cite{wa1}. 

It turned out   that the  horofunction
compactifications of $\R^n$ with respect to norms are unexpectedly related to
the noncommutative geometry developed by Alain Connes (see \cite{co}, \cite{ri1} and \cite{ri2}). This brought
another perspective to horofunction compactifications
and motivated the work in this paper.

Let $\ga$  be a countable discrete group such as $\Z^n$ and $SL(n, \Z)$.
Let $C_c(\ga)$ be the convolution $\star$-algebra of complex valued functions of finite support,
i.e., of compact support, 
 on $\ga$. Let $\pi$ be the usual $\star$-representation of $C_c(\ga)$ on $\ell^2(\ga)$.
Then the norm-completion of $\pi(C_c(\ga))$ in the space of operators of $\ell^2(\ga)$ is the {\em reduced $C^\star$-algebra}
 of $\ga$, denoted by $C_r^\star(\ga)$.

Let $\ell$ be a {\em length function} of $\ga$, i.e., a function  $\ell: \ga\to \R^+$ such that \linebreak 
(1) $\ell(e)=0$, 
(2) for all $g\in \ga$, $\ell(g^{-1})=\ell(g)$,
(3) for all $g_1, g_2\in \ga$, $\ell (g_1 g_2)\leq \ell (g_1) \ell(g_2)$.

For example, the word length on $\ga$ with respect to a set of generators gives rise to such a length function.

Let $M_\ell$ be the multiplication operator on $\ell^2(\ga)$ 
defined by the length function $\ell$, which is usually unbounded.
Then $\M_\ell$ serves as a Dirac operator in the noncommutative 
geometry of $C^\star_r(\ga)$.
The following fact is true: {\em For every $f\in C_c(\ga)$,
the commutator $[M_\ell, \pi(f)]$ is a bounded operator on
$\ell^2(\ga)$.}
This allows one to define a {\em semi-norm} on $C_c(\ga)$: 
\begin{equation*}\label{semi-norm}
      L_\ell(f)=|\!| [M_\ell, \pi(f)] |\!|.
\end{equation*}

In general, if $L$ is a semi-norm on a dense sub-$\star$-algebra $A$
of a unital $C^*$-algebra $\overline{A}$ such that $L(1)=0$,
then Connes \cite{co} (see \cite[p. 606]{ri1})  defined a metric $\rho_L$ on the state
space $S(\overline{A})$ of $\overline{A}$ as follows:
For any two states $\mu, \nu\in S(\overline{A})$,
\[
      \rho_L(\mu, \nu) \coloneqq \sup\{|\mu(a) -\nu(a)| \mid a\in A, L(a)\leq 1\}.
\]

We recall that a state on a $C^\star$-algebra $\overline{A}$  is a positive linear functional of norm 1. The set of all states of a $C^\star$-algebra $\overline{A}$
is denoted by $S(\overline{A})$, and is a convex subset of the space of linear functionals of $\overline{A}$.
Extreme points of $S(\overline{A})$ are called pure states of $\overline{A}$.
When $\overline{A}=C^0(X)$, the space of continuous functions
on a compact topological space $X$, then states on $\overline{A}$ correspond
to probability measures on $X$, and pure states correspond to
evaluations on $X$.

In \cite[p. 606]{ri1}, Rieffel called a semi-norm $L$ on $A$ a {\em Lip-norm} if the topology
on $S(\overline{A})$ induced from $\rho_{L_\ell}$ coincides with the
weak $\star$-topology,
and he called a unital $C^\star$-algebra $\overline{A}$
equipped with a Lip-norm 
a {\em compact quantum metric space}.

In \cite{ri1}, Rieffel asked the question: {\em Given a discrete
group $\ga$, is the seminorm $L_\ell$ on $C_c(\ga)$ coming
from a length function on $\ga$ a Lip-norm?}

He could only handle the case $\ga=\Z^n$ and prove the following result:

\begin{prop}[\cite{ri1}, Thm 0.1]
      Let $\ell$ be a length function on $\Z^n$ which is either the word length for some finite generating set or the restriction to $\Z^n$ of some norm on $\R^n$. Then the induced seminorm $L_\ell$ is a Lip-norm on $C_c(\Z^n)$, and hence $C_r^\star(\Z^n)$ is a compact quantum metric space.
\end{prop}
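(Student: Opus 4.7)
The plan is to pass to the dual side via the Fourier transform $C^\star_r(\Z^n)\cong C(T^n)$ with $T^n=\R^n/\Z^n$, under which $C_c(\Z^n)$ becomes the dense subalgebra of trigonometric polynomials and $\pi(f)$ becomes multiplication by $\hat f$. Pure states on $C^\star_r(\Z^n)$ then correspond to evaluations at points of $T^n$, and the weak-$\star$ topology on the state space matches the weak topology on probability measures on $T^n$. By Rieffel's standard criterion for Lip-norms, it is enough to check two things:
\begin{enumerate}
  \item[(i)] $L_\ell(f)=0$ if and only if $f\in\C\cdot\delta_e$ (equivalently, $\hat f$ is constant);
  \item[(ii)] the image of $\{f:L_\ell(f)\leq 1\}$ in $C(T^n)/\C 1$ is totally bounded.
\end{enumerate}

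For (i), a direct calculation with translation operators yields
\[
      [M_\ell,\pi(f)]\delta_k \;=\; \sum_{h\in\Z^n}\bigl(\ell(k+h)-\ell(k)\bigr)f(h)\,\delta_{k+h},
\]
so $\|[M_\ell,\pi(f)]\delta_k\|^2=\sum_h|\ell(k+h)-\ell(k)|^2\,|f(h)|^2$. If $f(h_0)\neq 0$ for some $h_0\neq 0$, one can choose a sequence $k_n\to\infty$ along which $|\ell(k_n+h_0)-\ell(k_n)|$ stays bounded below by a positive constant---possible because $\ell$ is asymptotically a norm on $\R^n$, which is immediate in the norm case and a standard feature of word-length balls in the other case---forcing $L_\ell(f)>0$. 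The converse $L_\ell(c\,\delta_e)=0$ is trivial since $\pi(\delta_e)=I$.

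The heart of the argument is (ii). The strategy is to dominate $L_\ell$ from below by a Lipschitz seminorm on $C(T^n)$. Let $\|\cdot\|^*$ be the dual of the norm on $\R^n$ whose restriction to $\Z^n$ is $\ell$ (or, in the word-length case, the dual of the asymptotic-cone norm whose unit ball is the convex hull of the generating set), and let $d^*$ be the induced quotient metric on $T^n$. The goal is the inequality
\[
      |\hat f(t)-\hat f(s)| \;\leq\; L_\ell(f)\cdot d^*(t,s)\qquad\text{for all } t,s\in T^n,
\]
to be obtained by testing $[M_\ell,\pi(f)]$ against normalized wave packets that are approximately $\chi_t$- and $\chi_s$-eigenvectors for the translation operators, yet are spread widely enough in position space that the asymptotic linearity $\ell(k+h)-\ell(k)\to$ (support function of $h$ against the unit sphere of $\|\cdot\|$) becomes visible. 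Once the estimate is in hand, the representatives of $\{f:L_\ell(f)\leq 1\}$ are uniformly equi-Lipschitz on the compact space $T^n$, and modding out constants makes them uniformly bounded as well, so Arzel\`a--Ascoli delivers relative compactness in $C(T^n)/\C 1$.

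The main obstacle I expect is this lower bound: producing a $c>0$ with $L_\ell(f)\geq c\cdot\mathrm{Lip}_{d^*}(\hat f)$. Testing $[M_\ell,\pi(f)]$ on single basis vectors $\delta_k$ only controls its action on rank-one subspaces and does not reflect the global Lipschitz behavior of $\hat f$; the construction of suitable spread-out wave-packet test vectors, together with careful use of the asymptotic-cone structure of $\ell$, is the delicate point. It is also here that the special features of $\Z^n$ with a norm- or word-length function become essential, since without such structure a length function on a discrete group need not yield a Lip-norm.
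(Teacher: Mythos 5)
The paper does not prove this proposition: it is stated as background, cited verbatim from Rieffel \cite{ri1}, and the text explicitly notes that Rieffel's proof makes ``crucial use of horofunction compactifications of $\R^n$ with respect to norms.'' So there is no in-paper argument to compare yours against, only Rieffel's.

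Your outline follows the right contour. Passing to $C(T^n)$ via Fourier transform, checking that $L_\ell$ vanishes exactly on $\C\delta_e$, and reducing the Lip-norm property to total boundedness of $\{f : L_\ell(f)\le 1\}$ modulo constants (which you then propose to get from an equi-Lipschitz bound plus Arzel\`a--Ascoli) is precisely Rieffel's general criterion, and your step~(i) computation is correct: since the vectors $\delta_{k+h}$ are orthogonal, $\|[M_\ell,\pi(f)]\delta_k\|^2 = \sum_h |\ell(k+h)-\ell(k)|^2|f(h)|^2$ with no cancellation, and taking $k=m h_0$ gives $\ell(k+h_0)-\ell(k)=\|h_0\|>0$ in the norm case (with the analogous stable-length argument for word lengths).

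The genuine gap is exactly where you flag it: the lower bound $L_\ell(f)\ge c\cdot\mathrm{Lip}_{d^*}(\hat f)$. Testing on $\delta_k$ and taking $k\to\infty$ only yields quantities of the form $\bigl(\sum_h |\langle\xi,h\rangle|^2|f(h)|^2\bigr)^{1/2}$ for a unit covector $\xi$, which is an $\ell^2$-average of the Fourier coefficients of the directional derivative of $\hat f$ and does \emph{not} dominate its $\sup$-norm. Your ``wave-packet'' strategy to fix this is essentially a Calder\'on--Zygmund commutator estimate (in Fourier dual form, $[\ell(D), M_{\hat f}]$ bounded below by the Lipschitz seminorm of $\hat f$); this is plausible but is a substantial piece of harmonic analysis that you have not supplied. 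Rieffel takes a different route: he organizes the limiting functions $h\mapsto\ell(k+h)-\ell(k)$ as $k\to\infty$ into the horofunction boundary of $(\R^n,\ell)$, and uses the resulting compact family of boundary ``Dirac operators'' to control the metric $\rho_L$ on states directly, rather than proving an $\infty$-norm commutator lower bound. Your ``asymptotic linearity'' remark is gesturing at exactly this structure --- the linear functions $\langle\xi,\cdot\rangle$ are the horofunctions --- so if carried through, your argument would likely rediscover Rieffel's compactification picture; as written, however, the lower bound is asserted rather than proved, and that is the whole theorem.
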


In proving this result, Rieffel made crucial use of horofunction
compactifications of $\R^n$ with respect to norms.
In this paper, he also raised the following question (\cite[Question 6.5]{ri1}): {\em 
Is it true that, for every finite-dimensional vector space and every norm on it, every horofunction (i.e., a boundary point of the horofunction compactification of $\R^n$) is a Busemann function, i.e., the limit of an almost-geodesic ray? }

This question motivated the paper \cite{kmn}  and  was settled completely in \cite{wa1}. 
It also motivated the other papers \cite{wa2}, \cite{wa3},  \cite{wa5}, \cite{agw},  \cite{ww1}, \cite{ww2}, \cite{an},  \cite{de} and \cite{ls} on horofunction compactifications.

\section{Toric varieties}

 In this section, we give a  summary of several results on toric varieties which are needed to understand and prove Theorem \ref{main}. The basic references for this section  are \cite{fu},  \cite{cls},  \cite{od1}, \cite{od2},  \cite{am}, \cite{cox}, and \cite{so}.

 \begin{defi}
 A toric variety over $\C$  is an irreducible variety $V$ over $\C$ such that
 \begin{enumerate}
      \item the complex torus $(\C^*)^n$ is a Zariski dense subvariety of $V$ and
      \item the action of $(\C^*)^n$ on itself by multiplication extends to an action of $(\C^*)^n$ to $V$.
 \end{enumerate}
 \end{defi}

 We  fix the  {\em  standard lattice} $\Z^n$ in $\R^n$, which gives $\R^n$ an {\em integral structure}, and also a $\Q$-structure, $\Q^n\subset \R^n$.

 Recall that a {\em rational polyhedral cone} $\sigma\subset \R^n$ is a cone generated by finitely many elements  $u_1, \cdots, u_m$ of $\Z^n$, or equivalently of $\Q^n$:
 \[
      \sigma=\{\lambda_1 u_1 +\cdots +\lambda_m u_m \in \R^n \mid \lambda_1, \cdots, \lambda_m \geq 0\}.
 \]

 Usually, $\sigma$ is assumed to be {\em strongly convex}: $\sigma \cap -\sigma =\{0\}$, i.e., $\sigma$ does not contain any line through the origin. A {\em face} of a cone $\sigma$ is the intersection of $\sigma$ with the 0-level set of a linear functional which is nonnegative on $\sigma$. The {\em relative interior} and {\em relative boundary} of a cone $\sigma$ are the interior respectively boundary of $\sigma$ in the linear subspace spannend by $\sigma$. 

 For each strongly convex rational polyhedral cone $\sigma$,  define its {\em dual cone}   $\sigma^\vee$ by
 \begin{equation}\label{affine-toric}
      \sigma^\vee \coloneqq \{v\in \R^n \mid \langle v, u\rangle\geq 0, \text{ for all } u\in \sigma\}.
 \end{equation}
 Then $\sigma^\vee$ is also a convex rational polyhedral cone, though it is not strongly convex anymore unless   $\dim \sigma=n$.

 \begin{defi}
      A { fan $\Sigma$  in $\R^n$ is a collection of strongly convex rational polyhedral cones} such that
      \begin{enumerate}
	  \item if $\sigma \in \Sigma$, then every face of $\sigma$ also belongs to $\Sigma$;
	  \item if $\sigma, \tau\in \Sigma$, then their intersection $\sigma\cap \tau$ is a common face of both of them, and hence belongs to $\Sigma$.
      \end{enumerate}
 \end{defi}

 In this paper, we only deal with  fans which consist of finitely many polyhedral cones.
 
 It is known that there is a strong correspondence between fans $\Sigma \subseteq \R^n$ and toric varieties, namely:
 \begin{enumerate} 
      \item For every fan $\Sigma$ of $\R^n$, there is an associated toric variety $X_\Sigma$,
      which is a normal algebraic variety.
      \item If a toric variety $X$ is a normal variety, then $V$ is of the form $X_\Sigma$ for some fan $\Sigma$ in $\R^n$.
 \end{enumerate}

%\mnote{Anna: Do we also require normality in this paper?}
 Because of this correspondence, toric varieties are often required to be normal, for example in \cite{fu}.  In this paper, we follow this convention and require toric varieties to be normal.
 
 The construction of a toric variety $X_\Sigma$ from a fan $\Sigma$ and a description of its topology in terms of $\Sigma$ is crucial to  the proof of Theorem \ref{main}. Therefore we give a short description here:

 Given a fan $\Sigma$ in $\R^n$, its associated {\em  toric variety}  $X_\Sigma$
 is constructed as follows:
 \begin{enumerate}
      \item Each cone $\sigma\in \Sigma$ gives rise to an affine toric variety $U_\sigma$. Specifically, $\sigma^\vee\cap \Z^n$ is  a finitely generated semigroup. Let $\m_1, \cdots, \m_k \in \sigma^\vee\cap \Z^n$ be  a set of generators of this semigroup, i.e., every element of $\sigma^\vee\cap \Z^n$ is of the form $a_1 \m_1+\cdots + a_k \m_k$, with $a_i$ being non-negative integers.  Then the Zariski closure of the image of $(\C^*)^n$ in $\C^k$  under the  embedding  
      \[
	  \varphi: (\C^*)^n \to \C^k, \quad \t \mapsto (\t^{\m_1}, \cdots,  \t^{\m_k})
      \]
      is the affine toric variety $U_\sigma$. Note that we use Laurent monomials for the notation: $\t^{\m_j} = \prod_{l =1}^{n} t_l^{m_{j,l}}$ for all $\t = (t_1, \ldots, t_n) \in (\C^*)^n$ where $m_{j, l}$ denotes the $l$-th component of $\m_j$.  
      \item For any two cones $\sigma_1, \sigma_2$ in $\Sigma$,  if $\sigma_1$ is a face of $\sigma_2$, then $U_{\sigma_1}$  is a Zariski dense subvariety of $U_{\sigma_2}$.
      \item The toric variety $X_\Sigma$ is obtained by gluing  these affine toric varieties $U_\sigma$ together: 
      \[
	  X_\Sigma=\cup_{\sigma\in \Sigma}  U_\sigma/\sim,
      \]
      where the relation $\sim$ is given  by the inclusion relation in (2):  Note that for any two cones $\tau, \sigma\in \Sigma$, the intersection $\tau\cap \sigma$, if nonempty,  is a common face of both $\tau$ and $\sigma$, and hence  $U_{\tau\cap \sigma}$ can be identified with a subvariety of both $U_\tau$ and $U_\sigma$.
 \end{enumerate}
 
 Many properties of $X_\Sigma$ can be expressed in terms of the  combinatorial properties of the fan $\Sigma$.  We state one about the orbits of $(\C^*)^n$ in $X_\Sigma$, details can be found for example in \cite[p. 119]{cls}, \cite[\S 9]{cox} or \cite[\S 3.1]{fu}.
 
 \begin{prop}\label{orbits}
      For every toric variety $X_\Sigma$, there is a bijective correspondence between orbits of the torus $(\C^*)^n$ in $X_\Sigma$ and cones $\sigma$ in the fan $\Sigma$. 
      Denote the orbit in $X_\Sigma$ corresponding to $\sigma$ by $\orb(\sigma) \subset X_\Sigma$. Then $\orb(\sigma)$ is a complex torus isomorphic to $(\C^*)^{n-\dim \sigma}$. In particular, the open and dense orbit $(\C^*)^n$ corresponds to the trivial cone $\{0 \}$. 
 \end{prop}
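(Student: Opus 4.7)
\textbf{Proof proposal for Proposition \ref{orbits}.} The plan is to carry out the standard \emph{orbit--cone correspondence} by analyzing each affine piece $U_\sigma$ locally and then showing that the pieces glue consistently. The key is to identify $U_\sigma$ with the set of semigroup homomorphisms $\mathrm{Hom}_{sg}(\sigma^\vee \cap \Z^n, \C)$ (where $\C$ is viewed as a multiplicative semigroup with zero), under which a torus point $\t \in (\C^*)^n$ corresponds to the character $\m \mapsto \t^\m$ and the embedding $\varphi$ becomes evaluation on the generators $\m_1, \dots, \m_k$. The $(\C^*)^n$-action then translates into $(\t \cdot \phi)(\m) = \t^\m \phi(\m)$.

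For each face $\tau \preceq \sigma$, I would construct a \emph{distinguished point} $x_\tau \in U_\sigma$ via the semigroup homomorphism
\[
      x_\tau(\m) = \begin{cases} 1 & \text{if } \m \in \tau^\perp \cap \sigma^\vee \cap \Z^n, \\ 0 & \text{otherwise.} \end{cases}
\]
The multiplicativity of $x_\tau$ follows from the fact that $\tau^\perp \cap \sigma^\vee$ is a face of $\sigma^\vee$, so the complement is closed under addition in $\sigma^\vee$. The stabilizer of $x_\tau$ in $(\C^*)^n$ is cut out by the equations $\t^\m = 1$ for $\m \in \tau^\perp \cap \Z^n$; since $\tau^\perp \cap \Z^n$ is a sublattice of rank $n - \dim \tau$, this stabilizer is a subtorus of dimension $\dim \tau$. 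Therefore the orbit $\orb(\tau) \coloneqq (\C^*)^n \cdot x_\tau$ has dimension $n - \dim \tau$ and is isomorphic to $(\C^*)^{n - \dim \tau}$. In particular the trivial cone $\{0\}$ has $x_{\{0\}} \equiv 1$, whose orbit is the open dense torus itself.

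Next I would establish the disjoint decomposition $U_\sigma = \bigsqcup_{\tau \preceq \sigma} \orb(\tau)$. Given $\phi \in U_\sigma$, the support $S_\phi \coloneqq \{\m \in \sigma^\vee \cap \Z^n : \phi(\m) \neq 0\}$ is a face-semigroup of $\sigma^\vee \cap \Z^n$, which by the duality between faces of $\sigma$ and faces of $\sigma^\vee$ has the form $\tau^\perp \cap \sigma^\vee \cap \Z^n$ for a unique face $\tau \preceq \sigma$. One then shows $\phi \in \orb(\tau)$ by producing a torus element sending $x_\tau$ to $\phi$, which amounts to solving a compatible system of multiplicative equations $\t^\m = \phi(\m)$ on the generators of the face $\tau^\perp \cap \sigma^\vee \cap \Z^n$ — this is the main technical step, and the hard part: it relies on the fact that this face-semigroup generates a saturated sublattice of $\Z^n$ of rank $n - \dim \tau$, which is precisely the combinatorial core of the toric dictionary. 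Finally, gluing is automatic: if $\tau$ is a common face of $\sigma_1$ and $\sigma_2$, then the distinguished points $x_\tau$ constructed in $U_{\sigma_1}$ and $U_{\sigma_2}$ both restrict to the same semigroup homomorphism on $U_\tau \subset U_{\sigma_1} \cap U_{\sigma_2}$, so $\orb(\tau)$ is globally well-defined. Taking the union of the local decompositions over $\sigma \in \Sigma$ then yields $X_\Sigma = \bigsqcup_{\sigma \in \Sigma} \orb(\sigma)$, completing the bijective correspondence.
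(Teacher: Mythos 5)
The paper does not prove this proposition but cites it as standard, referring to Cox--Little--Schenck, Cox, and Fulton; your proof is precisely the orbit--cone correspondence argument from those references (identify $U_\sigma$ with $\mathrm{Hom}_{sg}(\sigma^\vee\cap\Z^n,\C)$, build distinguished points as indicator homomorphisms of the faces $\tau^\perp\cap\sigma^\vee$, stratify by support, and glue), and it is correct. One small wording note: the multiplicativity of $x_\tau$ rests on $\tau^\perp\cap\sigma^\vee$ being a face of $\sigma^\vee$, which gives the stronger ``prime'' property that $\m_1+\m_2$ lies in the face only if both $\m_1$ and $\m_2$ do --- what you call the complement being ``closed under addition'' should really be that the complement absorbs addition with arbitrary elements of $\sigma^\vee\cap\Z^n$, but the underlying fact you invoke is the right one.
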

 
% \vspace{.2in}
%\noindent{\bf A topological model of toric varieties}
% \vspace{.1in}
 
 \subsection{A topological model of toric varieties}
 
 In order to better understand the toric variety $X_\Sigma$  as a compactification of $(\C^*)^n$, we want to give a  topological description of $X_\Sigma$ which  exhibits its
 dependence on $\Sigma$ clearly and also describes explicitly  sequences in $(\C^*)^n$ which converge to points in the complement $X_\Sigma-(\C^*)^n$.
 
 To do so, note that in terms of the standard integral structure $i\Z\subset \C$, we can realize $\C^*$ by:
 \[
      i\Z\backslash \C \cong \C^*, \quad z\mapsto e^{-2\pi z},
 \]
 and when Re$(z)\to +\infty$, it holds $e^{-2\pi z}\to 0$.  Then the exponential map $e^{-2\pi \z}=(e^{-2\pi z_1}, \cdots, e^{-2\pi z_n})$ gives an identification
 \begin{equation*}\label{identification1}
      i\Z^n \backslash \C^n \cong (\C^*)^n.
 \end{equation*}
 Conversely, using the logarithmic function $-\frac{1}{2\pi}\ln$, we get an identification
 \begin{equation}\label{identification2}
      (\C^*)^n\cong i \Z^n \backslash \C^n.
 \end{equation}
 In the following, we denote the complex torus $(\C^*)^n$ by $T$.

 Given any fan $\Sigma$ in $\R^n$, we will define a {\em bordification} $\overline{T}_\Sigma$  of $T=(\C^*)^n$  and show in Proposition \ref{identification3} below that  $\overline{T}_\Sigma$ is homeomorphic to the toric variety  $X_\Sigma$ as $T$-topological spaces. 
 
 \begin{defi}
      For each cone $\sigma\in \Sigma$, define a {\em boundary component}
      \[
	  O(\sigma)=i \Z^n \backslash \C^n/\mathrm{Span}_\C(\sigma).
      \]
 \end{defi}

 Note that this is  a complex torus $(\C^*)^{n-\dim \sigma}$ of dimension equal to $n-\dim \sigma$.  When $\sigma=\{0\}$, then $O(\sigma)=T$. Later we will identify $O(\sigma)$ with $\orb(\sigma)$. 

 \begin{defi}
      Define a {\em topological bordification}  $\overline{T}_\Sigma$ by 
      \begin{equation}\label{orbit}
	  \overline{T}_\Sigma=T \cup \coprod_{\substack{\sigma\in \Sigma, \\ \sigma\neq \{0\}} } O(\sigma)
      \end{equation}
      with the following topology:
      A sequence $\z_j=\x_j+i \y_j \in T=i\Z^n\backslash \C^n$,  where $\x_j\in \R^n$ and $\y_j\in \Z^n\backslash \R^n$, converges to a point $\z_\infty \in O(\sigma)$ for some $\sigma \in \Sigma$ if and only if the following  conditions hold:
      \begin{enumerate}
	  \item The real part $\x_j$ can be written as  $\x_j=\x_j'+\x_j''$ such that when $j\to +\infty$ it holds
	  \begin{enumerate}
	        \item the first part $\x_j'$  is contained in the relative interior of the cone $\sigma$ and its distance to the relative boundary of $\sigma$ goes to infinity,
	        \item the second part $\x_j''$ is bounded. 
	  \end{enumerate}
	  %\mnote{Anna: \red{If the cone $\sigma$ is not contained in $\R^n$, how can the real part $\x_j$ of the sequence be conatined in the relative interior of $\sigma$? See for example the fan obtained by taking cones over the faces of a square and dim($\sigma) = 1$.}} 
	  \item the image of $\z_j$ in $O(\sigma)=i\Z^n\backslash \C^n/\mathrm{Span}_\C(\sigma)$ under the projection
	  \[
	        i\Z^n\backslash \C^n\to i\Z^n\backslash \C^n/\mathrm{Span}_\C(\sigma)
	  \]
	  converges to the point $\z_\infty$.
      \end{enumerate}
 \end{defi}

 Note that the imaginary part $\y_j$ of $\z_j$ lies in the compact torus  $\Z^n\backslash \R^n=(S^1)^n$, and the second condition controls both the imaginary part $\y_j$ and the bounded component
 $\x_j''$ of the real part $\x_j$.
 
 The behaviour of converging sequences is shematically shown in Figure \ref{fig:collapsing_local} and Figure \ref{fig:collapsing_global}.
 
  %\vspace*{3.5in} {\bf Add a picture}
  %\input{pictures_collapsing_fibers_2in1_a}  
  %\input{pictures_collapsing_fibers_2in1_b}

  \begin{figure}[h!]
  \centering
  \includegraphics[scale=0.7]{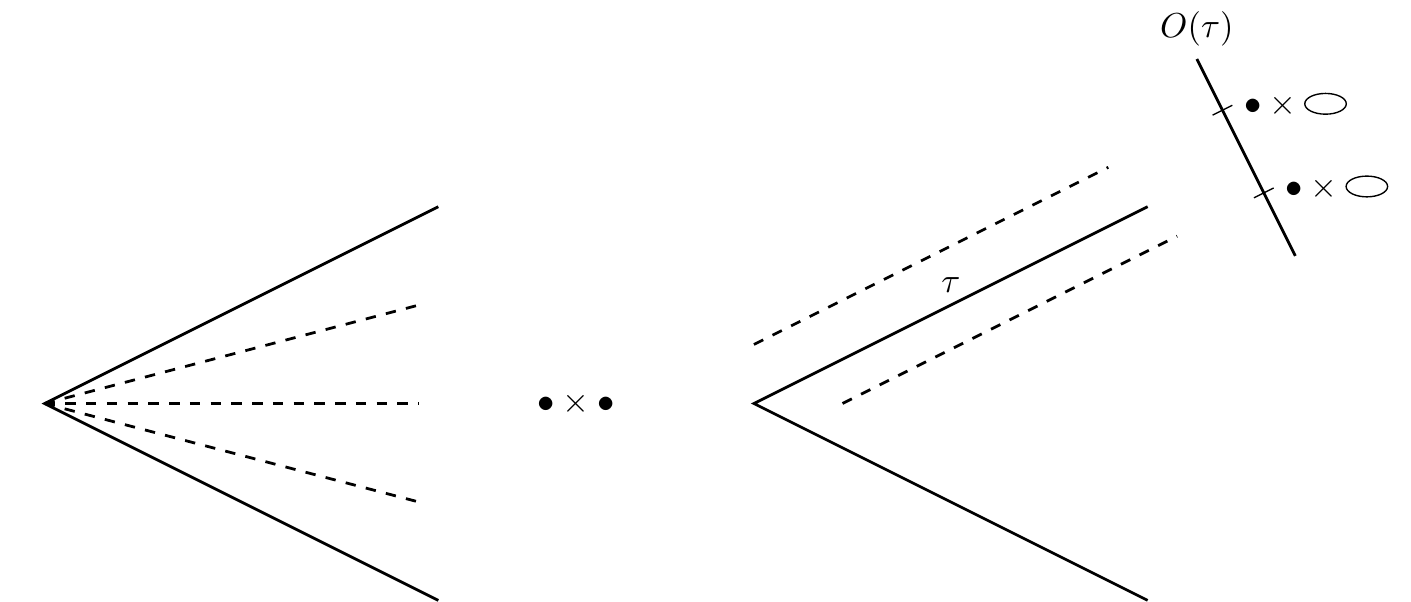}
  \caption{\footnotesize Left: Within a chamber all fibers collapse in the same way: Both circles are collapsed to points. Right: Fibers parallel to a wall collapse differently, depending on the wall and the distance to it. Only one circle is collapsed to a point.} \label{fig:collapsing_local}
  \end{figure}
  
  \begin{figure}[h!]
  \centering
  \includegraphics[scale=0.8]{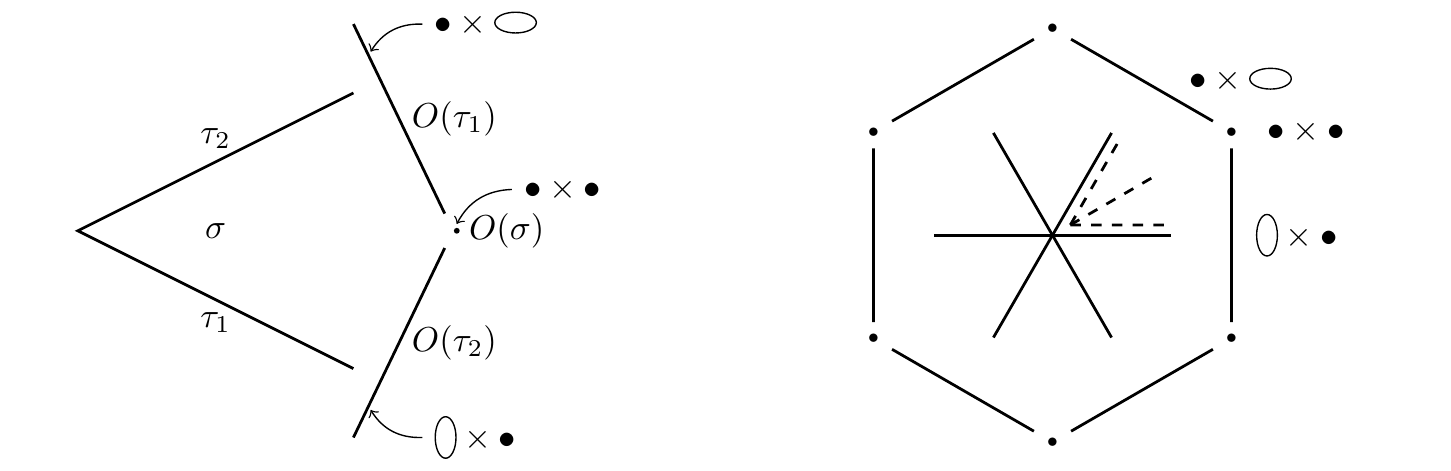}
  \caption{\footnotesize Left: Collapsing behaviour of the fibers when the base point moves to infinity. Depending on the direction of movement either one or both circles are collapsed. Right: Global picture of collapsing of a whole toric variety.} \label{fig:collapsing_global}
  \end{figure}

 \begin{rem}
      The above definition of $\overline{T}_\Sigma$ and the identification of $X_\Sigma$ with $\overline{T}_\Sigma$ in Proposition \ref{identification3}  follows the construction and discussion in \cite[pp. 1-6]{am}. We note that there is one difference with the convention there: On page 2 in \cite{am}, the complex torus $(\C^*)^n$ is identified with $\Z^n \backslash \C^n$, and the real part is the compact torus $(S^1)^n$, and the imaginary part is  $i\R^n$, which can be  identified with $\R^n$. 
 %\mnote{Too many references, no content. State the results and the construction, do not only refer to it.}  
 \end{rem}

 \begin{rem}
      The toric variety  $X_\Sigma$ is compact if and only if  the support of $\Sigma$ is equal to $\R^n$, i.e., $\Sigma$ gives a rational polyhedral decomposition of $\R^n$.
      Similarly, it  is clear from the definition that the bordification  $\overline{T}_\Sigma$  is a compactification of $T$  if and only if the support of $\Sigma$ is equal to $\R^n$.
 \end{rem}

  To obtain a continuous action of $T$ on $\overline{T}_\Sigma$, we note that $\C^n$
  or  $i\Z^n\backslash \C^n$ acts on $T$ and every boundary component $O(\sigma)$
  by translation. These translations are compatible in the following sense.
  
  \begin{lem}\label{translation}
  For any sequence $\z_j \in T=i\Z^n\backslash \C^n$, if $\z_j$ is convergent in $\overline{T}_\Sigma$,
  then for any vector $\z\in \C^n$, or rather its image in $i\Z^n\backslash \C^n$,
  the shifted sequence $\z_j+\z$ is also convergent. Furthermore, 
  \[
      \lim_{n\to+\infty} \z+\z_j =  \z+\lim_{n\to+\infty} \z_j.
  \]
  \end{lem}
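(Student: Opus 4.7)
The plan is to verify the convergence criteria in the definition of $\overline{T}_\Sigma$ directly for the translated sequence. If $\lim_j \z_j$ lies in $T$ itself, the claim is immediate from continuity of the group operation on $T$, so I may assume $\z_\infty \in O(\sigma)$ for some nontrivial cone $\sigma \in \Sigma$. Write $\z = \a + i\b$ with $\a,\b \in \R^n$, and use the hypothesis to decompose $\x_j = \x_j' + \x_j''$ as in the definition, so that $\x_j'$ lies in the relative interior of $\sigma$ with distance to the relative boundary tending to infinity, while $\x_j''$ is bounded.

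Next I would exhibit an admissible decomposition for the real part of $\z + \z_j$. The real part is $\a + \x_j'+ \x_j''$, and I take
\[
(\a + \x_j)' \coloneqq \x_j', \qquad (\a + \x_j)'' \coloneqq \a + \x_j''.
\]
Condition (1a) continues to hold because $\x_j'$ is unchanged. Condition (1b) holds because $\x_j''$ is bounded and $\a$ is a fixed vector, so the sum remains bounded. This verifies condition (1) of the convergence definition for $\z + \z_j$.

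For condition (2), I would use that the projection
\[
\pi_\sigma \colon i\Z^n \backslash \C^n \longrightarrow i\Z^n \backslash \C^n / \mathrm{Span}_\C(\sigma) = O(\sigma)
\]
is a continuous homomorphism of topological abelian groups, hence equivariant under translation by $\z$ (viewed via its image in the quotient). Since $\pi_\sigma(\z_j) \to \z_\infty$ by assumption, continuity gives $\pi_\sigma(\z + \z_j) = \pi_\sigma(\z) + \pi_\sigma(\z_j) \to \pi_\sigma(\z) + \z_\infty$. Combined with the decomposition above, this shows $\z + \z_j \to \z + \z_\infty$ in $\overline{T}_\Sigma$, which is the asserted identity.

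The argument is essentially a verification, and I do not foresee a substantial obstacle: the only delicate point is choosing the decomposition so that the unbounded piece $\x_j'$ (which carries the interesting asymptotics relative to $\sigma$) is left untouched, while the fixed translation vector $\a$ is absorbed into the bounded component. Any attempt to split $\a$ between the two parts of the decomposition would risk breaking condition (1a), so the key observation is simply that the definition of convergence is robust under bounded perturbations of the real part, together with continuity of the quotient map on the imaginary/complementary directions.
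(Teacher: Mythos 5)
Your proof is correct and is precisely the direct verification the paper omits (the lemma is stated without proof before Proposition \ref{extends}). The key observation — leave the unbounded piece $\x_j'$ untouched so condition (1a) is preserved, absorb the fixed translation into the bounded remainder for (1b), and invoke continuity of the quotient homomorphism $\pi_\sigma$ for condition (2) — is exactly the right argument.
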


  %\mnote{Anna: \red{Reformulate this sentece/add a lemma about parallel shif of sequences.Conditions for convergence do not chcange.}} 
 
 This implies the following result.
 
 \begin{prop}\label{extends}
      The action of $T=(\C^*)^n$ on itself by multiplication extends to a continuous action on $\overline{T}_\Sigma$, and the decomposition in Equation \ref{orbit} into $O(\sigma)$ gives the orbit decomposition of $\overline{T}_\Sigma$ with respect to the action of $T$.
 \end{prop}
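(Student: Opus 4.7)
The plan is to define the extended action through the additive model $T=i\Z^n\backslash\C^n$, verify continuity using Lemma \ref{translation}, and read off the orbit decomposition from the algebraic form of $O(\sigma)$. For $\z\in T$ with a lift $\tilde\z\in\C^n$ and a boundary point $p\in O(\sigma) = i\Z^n\backslash\C^n/\mathrm{Span}_\C(\sigma)$ with a lift $\tilde p\in\C^n$, I set $\z\cdot p$ to be the class of $\tilde\z+\tilde p$ in $O(\sigma)$. Well-definedness is immediate because both lifts are only ambiguous modulo subgroups that are absorbed in the target quotient, and for $\sigma=\{0\}$ this recovers the given multiplication on $T$. Each $O(\sigma)$ is preserved by construction; the induced $T$-action on $O(\sigma)$ is transitive since $O(\sigma)$ is a further quotient of the acting group $T=i\Z^n\backslash\C^n$ by $\mathrm{Span}_\C(\sigma)$; and the $O(\sigma)$ are pairwise disjoint by the definition in \eqref{orbit}, which gives the orbit identification.

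The substantive step is continuity of the action map $\Phi: T\times\overline{T}_\Sigma\to\overline{T}_\Sigma$. Since the topology on $\overline{T}_\Sigma$ is specified sequentially, I would take $\z_j\to\z$ in $T$ and $p_j\to p_\infty\in O(\sigma)$ and verify that $\Phi(\z_j,p_j)\to\Phi(\z,p_\infty)$. Writing $\z_j=\z+\epsilon_j$ with $\epsilon_j\to 0$ in $T$, I split
\[
\Phi(\z_j,p_j) \;=\; \z + (\epsilon_j+p_j)
\]
and apply Lemma \ref{translation} to the fixed shift $\z$, reducing the problem to showing $\epsilon_j+p_j\to p_\infty$. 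For this I would take a decomposition $p_j=\x_j'+\x_j''+i\y_j$ witnessing the convergence of $p_j$ and absorb a sufficiently small real-part lift of $\epsilon_j$ into the bounded summand $\x_j''$; conditions (1a) and (1b) in the convergence criterion remain satisfied because $\x_j'$ is left unchanged, and the images in $O(\sigma)$ differ from those of $p_j$ by a null sequence, so convergence of images to $p_\infty$ is preserved.

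The hard part will be this continuity verification, because $\overline{T}_\Sigma$ has been endowed only with a convergence criterion involving the non-canonical decomposition $\x_j=\x_j'+\x_j''$, so one must check that a bounded perturbation yields an admissible witnessing decomposition for the translated sequence. Once sequential continuity at boundary points is established, continuity on the open dense stratum $T\times T$ is just the continuity of the group law of $T$, and Proposition \ref{extends} follows together with the orbit statement recorded in the first paragraph.
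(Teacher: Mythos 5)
Your approach matches the paper's: define the extended action by translation in the additive model $i\Z^n\backslash\C^n$, read off the orbit decomposition from the quotient description of $O(\sigma)$, and deduce continuity from Lemma~\ref{translation}. The paper's proof is essentially a one-line citation of that lemma, so the joint-continuity verification you add (splitting $\z_j=\z+\epsilon_j$, invoking the lemma for the fixed shift $\z$, and absorbing the null perturbation $\epsilon_j$ into the bounded summand $\x_j''$ while shifting the images in $O(\sigma)$ by a null sequence) just makes explicit what the paper leaves implicit, and is correct.
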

 
 \begin{proof}
      We note that the multiplication of the torus $T$ on itself  and the boundary  components $O(\sigma)$ corresponds  to translation in $\C^n$ and $i \Z^n \backslash \C^n$.  Then the proposition follows from Lemma \ref{translation}.
 \end{proof}

 %\mnote{Anna: We do not prove Prop. \ref{extends}., should we give a reference?} 

 One key result we need for the proof of Theorem \ref{main}  is the following description of the toric variety $X_\Sigma$ as a {\em topological $T$-space}. Since this proposition and its proof are not explicitly written down in literature,  we give an outline of the proof on page \pageref{proof_prop_identification} for the convenience of the reader. %, especially the reader working on the  metric geometry and normed vector spaces who might not be familiar with toric varieties.
 
 %\mnote{Anna: write something about the pictures. They are not mentioned in the text and it is not clear what they show and why they are there.}
 
 \begin{prop}\label{identification3}
      The identity map on $T=(\C^*)^n$ extends to a homeomorphism \mbox{$X_\Sigma \to \overline{T}_\Sigma$,} which is  equivariant with respect to the action of $T=(\C^*)^n$, and the $T$-orbits $\mathrm{orb}(\sigma)$ in the toric variety $X_\Sigma$ are mapped homeomorphically to the boundary components $O(\sigma)$.
      %\mnote{Anna: I would add a second part to the proposition about the identification of $\orb(\sigma)$ and $O(\sigma)$, which is also shown in the proof.}
 \end{prop}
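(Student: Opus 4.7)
The plan is to reduce the identification to the affine case and then glue. First, define $\Phi\colon \overline{T}_\Sigma \to X_\Sigma$ on underlying sets: take the identity on $T$, and on each nonzero component $O(\sigma) = i\Z^n \backslash \C^n / \mathrm{Span}_\C(\sigma)$ use the exponential identification with $\orb(\sigma) \cong (\C^*)^{n-\dim \sigma}$ from Proposition \ref{orbits}, matched through the shared characters $\sigma^\perp \cap \Z^n$. This gives a bijection of underlying sets.

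Both spaces are then covered by pieces indexed by $\sigma \in \Sigma$: in $X_\Sigma$ the affine opens $U_\sigma$, and in $\overline{T}_\Sigma$ the subsets $V_\sigma \coloneqq T \cup \coprod_{\{0\} \neq \tau \preceq \sigma} O(\tau)$, which are open in $\overline{T}_\Sigma$ by the convergence criterion (a sequence whose real part eventually heads into $\mathrm{rel\,int}(\tau)$ for some $\tau \preceq \sigma$ stays in $V_\sigma$). The gluings match: $V_\tau \subset V_\sigma$ corresponds to $U_\tau \subset U_\sigma$ for $\tau \preceq \sigma$. So it suffices to show that $\Phi|_{V_\sigma}\colon V_\sigma \to U_\sigma$ is a homeomorphism for each $\sigma$.

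For this, let $\m_1,\ldots,\m_k$ generate the semigroup $\sigma^\vee \cap \Z^n$ and embed $U_\sigma \hookrightarrow \C^k$ by $\t \mapsto (\t^{\m_1},\ldots,\t^{\m_k})$. Writing $\t = e^{-2\pi \z}$ with $\z = \x + i\y$, one has $\t^{\m_l} = e^{-2\pi\langle \x,\m_l\rangle}e^{-2\pi i \langle \y,\m_l\rangle}$, so convergence in $\C^k$ is controlled by $\langle \x_j,\m_l\rangle \in \R$ and $\langle \y_j,\m_l\rangle \bmod \Z$ for each $l$. Suppose $\z_j \to \z_\infty \in O(\tau)$ in $V_\sigma$ with decomposition $\x_j = \x_j' + \x_j''$ as in the definition. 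If $\m_l \in \tau^\perp$, then $\langle \x_j',\m_l\rangle = 0$, so $\t_j^{\m_l}$ depends only on the image of $\z_j$ in $i\Z^n \backslash \C^n / \mathrm{Span}_\C(\tau)$ and converges to the corresponding coordinate of $\z_\infty$. If $\m_l \notin \tau^\perp$, then $\langle \cdot,\m_l\rangle$ is non-negative on $\sigma$ and not identically zero on $\mathrm{Span}(\tau)$, hence strictly positive on $\mathrm{rel\,int}(\tau)$; a ball-containment argument (taking $\x_j' - r\,\mathrm{proj}_{\mathrm{Span}(\tau)}(\m_l)/\|\mathrm{proj}_{\mathrm{Span}(\tau)}(\m_l)\| \in \tau$ for $r \leq d(\x_j',\mathrm{rel\,bdry}(\tau))$) yields
\[
  \langle \x_j',\m_l\rangle \geq d(\x_j',\mathrm{rel\,bdry}(\tau)) \cdot \|\mathrm{proj}_{\mathrm{Span}(\tau)}(\m_l)\| \longrightarrow \infty,
\]
so $\t_j^{\m_l} \to 0$. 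This matches exactly the coordinate description of $\orb(\tau) \subset U_\sigma$, and continuity of $\Phi|_{V_\sigma}$ follows. Gluing across $\sigma \in \Sigma$ produces the continuous bijection $\overline{T}_\Sigma \to X_\Sigma$, with $T$-equivariance supplied by Proposition \ref{extends}.

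The principal obstacle is the reverse direction, needed to show the inverse is also continuous. Given a convergent sequence $\varphi(\t_j) \to p \in \orb(\tau) \subset U_\sigma$, one must reconstruct a decomposition $\x_j = \x_j' + \x_j''$ meeting the defining conditions of $\overline{T}_\Sigma$. The face $\tau$ is detected by which coordinates of $p$ vanish, since $\orb(\tau)$ is cut out inside $U_\sigma$ by $\t^{\m_l}=0$ exactly for $\m_l \notin \tau^\perp$. Projecting $\x_j$ onto $\mathrm{Span}(\tau)$ along a fixed complement supplies the candidate $\x_j'$; reading the inequality above backwards shows $\x_j' \in \mathrm{rel\,int}(\tau)$ for large $j$ with distance to $\mathrm{rel\,bdry}(\tau)$ tending to infinity, while the boundedness of $\x_j''$ is forced by the convergence of the coordinates indexed by $\m_l \in \tau^\perp$. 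Verifying that this decomposition is well-defined (independent of the chosen complement up to adjustment) and indeed yields convergence in $\overline{T}_\Sigma$ is the technical heart of the argument; once done, $\Phi|_{V_\sigma}$ is a homeomorphism, and the proposition follows.
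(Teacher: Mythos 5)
Your approach is correct in outline but takes a genuinely different route from the paper's. You cover both spaces by affine charts $U_\sigma \leftrightarrow V_\sigma$ and carry out the coordinate analysis of convergence directly at an arbitrary boundary point $p \in \orb(\tau)$ for every face $\tau \preceq \sigma$. The paper instead proves the equivalence only at the single distinguished point $x_\sigma \leftrightarrow 0_\sigma$ in each orbit (Lemma \ref{coincide-special-point}, which is essentially your coordinate computation specialized to $\tau = \sigma$ and $p = x_\sigma$), and then transports the result to arbitrary boundary points using the continuous $T$-action from Proposition \ref{extends}: every $p \in \orb(\sigma)$ is $\t_\infty \cdot x_\sigma$, so $\z_j \to p$ in $X_\Sigma$ iff $\t_\infty^{-1}\z_j \to x_\sigma$, which by the lemma holds iff $\t_\infty^{-1}\z_j \to 0_\sigma$ in $\overline{T}_\Sigma$, i.e., iff $\z_j \to \t_\infty \cdot 0_\sigma$. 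The payoff of the paper's route is that the delicate reverse direction --- recovering a decomposition $\x_j = \x_j' + \x_j''$ from coordinate convergence in $U_\sigma$ --- only has to be handled once per cone, at the distinguished point; your route must do it at every boundary point, which is why you end up with a larger ``technical heart.''

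Your forward direction is carefully worked out and the ball-containment inequality
\[
\langle \x_j', \m_l\rangle \;\geq\; d\bigl(\x_j', \mathrm{rel\,bdry}(\tau)\bigr)\cdot \bigl\|\mathrm{proj}_{\mathrm{Span}(\tau)}(\m_l)\bigr\|
\]
is correct. However, for the reverse direction the phrase ``reading the inequality above backwards'' understates what is needed: that inequality bounds $\langle\x_j',\m_l\rangle$ from \emph{below} by the distance, whereas the inverse continuity requires the \emph{distance} to $\mathrm{rel\,bdry}(\tau)$ to be bounded below in terms of the $\langle\x_j',\m\rangle$'s. The right statement is that $d(\x_j',\mathrm{rel\,bdry}(\tau))$ equals (up to normalization) the minimum of $\langle\x_j',\m\rangle / \|\mathrm{proj}_{\mathrm{Span}(\tau)}(\m)\|$ over finitely many $\m \in \sigma^\vee\cap\Z^n$ cutting out the facets of $\tau$, combined with the fact that the $\m_l\in\tau^\perp$ span $\tau^\perp$ and hence control the transverse part $\x_j''$. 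This is a separate argument, not a formal reversal of the displayed inequality, and it should be proved as its own lemma to close the gap you have identified.
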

 
 The identification between $X_\Sigma$ and $\overline{T}_\Sigma$  allows one  to see that  when a sequence $(\x_j)_j$ of  points in the real part $\R^n$ of the complex  torus $i\Z^n \backslash \C^n$ goes to infinity along  the directions contained  in a cone $\sigma$ of the fan  $\Sigma$,  the sequence $\x_j$  will converge to a point of a  complex torus $i\Z^n\backslash \C^n  /\mathrm{Span}_\C(\sigma)$ of smaller dimension. Hence  the compact  torus $(S^1)^n$, which is the fiber over $x_j$ in the toric variety,  will collapse to a torus of smaller  real dimension $\dim \sigma$.
 
 \begin{rem}
      This result is well-known and can be found for example in \cite[pp. 1-6]{am} \cite[\S 10]{od2}  \cite[p. 211]{cox} or \cite[p. 54]{fu}. %\mnote{check references}
      %The books \cite[pp. 1-6]{am} \cite[\S 10]{od2} study the noncompact part of the torus $\R^n \subset (\C^*)^n$. 
      Such a picture of toric varieties including also the compact part of the torus $(\C^*)^n$  is often described in connection with the moment map of toric varieties (for a reference see \cite[p. 79]{fu} or \cite{mi}). We will come back to this map later. But first we need a description of it 
      as a bordification of $T$ instead of a bounded realization via the image of the moment map.
      A bordification of  the noncompact part  of the torus $\R^n \subset (\C^*)^n$  is described in this way in \cite[p. 6]{am} (see also \cite[\S 10]{od2}) together with a map from the toric variety to this bordification. 
 \end{rem}

 First, we recall some properties of orbits of $T$ in a toric variety $X_\Sigma$.
 The 1-1 correspondence between $T$-orbits in  $X_\Sigma$ and cones in $\Sigma$ mentioned in Proposition \ref{orbits} above  can be described more explicitly (see \cite[p. 28]{fu}, \cite[p. 118]{cls}  and \cite[p. 212]{cox}):

 \begin{prop}\label{distinguished_point}
      For every cone $\sigma\in \Sigma$, there is a {\em distinguished point} $x_\sigma$ in the affine toric variety $U_\sigma\subset X_\Sigma$. It is contained in the orbit $\orb(\sigma)$ of $(\C^*)^n$ in $X_\Sigma$ corresponding to $\sigma$ (see Proposition \ref{orbits}), and hence the orbit $\orb(\sigma)$ is equal to the orbit $(\C^*)^n\cdot x_\sigma$.
 \end{prop}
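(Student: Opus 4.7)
The plan is to construct $x_\sigma$ explicitly as a $\C$-point of the affine toric variety $U_\sigma$ using the semigroup description, then identify its $T$-orbit with $\orb(\sigma)$.

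First, I would recall the functor-of-points description: since $U_\sigma = \mathrm{Spec}\,\C[\sigma^\vee \cap \Z^n]$, the $\C$-points of $U_\sigma$ are in bijection with semigroup homomorphisms $x \colon \sigma^\vee \cap \Z^n \to (\C,\cdot)$, where $\C$ is regarded as a multiplicative monoid (with $0$ allowed). In this language, the embedding $T \hookrightarrow U_\sigma$ sends a point $t \in (\C^*)^n$ to the character $\m \mapsto t^\m$, and the torus action of $t \in T$ on a point $x \in U_\sigma$ is given by $(t\cdot x)(\m) = t^\m\, x(\m)$. I would then \emph{define} the distinguished point $x_\sigma$ by
\[
x_\sigma(\m) \;=\; \begin{cases} 1 & \text{if } \m \in \sigma^\perp \cap \Z^n,\\ 0 & \text{otherwise},\end{cases}
\]
where $\sigma^\perp = \{\m \in \R^n \mid \langle \m,u\rangle=0 \text{ for all } u\in\sigma\}$.

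Next I would verify that $x_\sigma$ is indeed a semigroup homomorphism. The key point is that $\sigma^\perp \cap \sigma^\vee$ is a face of the dual cone $\sigma^\vee$, since it is cut out of $\sigma^\vee$ by the linear functionals coming from elements of $\sigma$, which are nonnegative on $\sigma^\vee$ and vanish precisely on this face. Because $\sigma^\perp\cap\sigma^\vee$ is a face, for any $\m_1,\m_2 \in \sigma^\vee\cap\Z^n$ the sum $\m_1+\m_2$ lies in $\sigma^\perp$ if and only if both $\m_1$ and $\m_2$ do. This is exactly the compatibility needed for $x_\sigma(\m_1+\m_2) = x_\sigma(\m_1)\,x_\sigma(\m_2)$ (both sides are $1$ if both summands lie in $\sigma^\perp$, and $0$ otherwise). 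Hence $x_\sigma$ defines a bona fide point of $U_\sigma$, clearly lying in $U_\sigma\subset X_\Sigma$.

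Then I would compute the $T$-orbit of $x_\sigma$. For $t\in T$, the action gives $(t\cdot x_\sigma)(\m) = t^\m$ when $\m\in\sigma^\perp\cap\Z^n$ and $0$ otherwise, so the orbit consists exactly of those semigroup homomorphisms $\sigma^\vee\cap\Z^n\to\C$ whose support equals $\sigma^\perp\cap\sigma^\vee\cap\Z^n$ and which restrict there to a character of the lattice $\sigma^\perp\cap\Z^n$. These characters form a torus isomorphic to $\mathrm{Hom}(\sigma^\perp\cap\Z^n,\C^*)\cong(\C^*)^{n-\dim\sigma}$, and moreover the stabilizer of $x_\sigma$ is exactly the subtorus of $T$ annihilating $\sigma^\perp\cap\Z^n$, so $T\cdot x_\sigma\cong(\C^*)^{n-\dim\sigma}$.

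Finally I would match this orbit with $\orb(\sigma)$. By Proposition~\ref{orbits}, the $T$-orbits in $X_\Sigma$ are in bijection with the cones of $\Sigma$, and $\orb(\sigma)$ has dimension $n-\dim\sigma$. Since $x_\sigma\in U_\sigma$ and the only orbit in $U_\sigma$ of dimension $n-\dim\sigma$ under this bijection is $\orb(\sigma)$ itself (the other orbits $\orb(\tau)$ contained in $U_\sigma$ correspond to proper faces $\tau\prec\sigma$ and hence have strictly larger dimension), we conclude that $T\cdot x_\sigma=\orb(\sigma)$. The main technical subtlety is verifying the face property $\sigma^\perp\cap\sigma^\vee\prec\sigma^\vee$ carefully enough to secure the semigroup-homomorphism check; once that is in hand the orbit identification is forced by dimension and the orbit-cone correspondence.
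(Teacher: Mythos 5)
Your proof is correct, but it takes a genuinely different route from the paper. The paper does not actually prove this proposition: it only \emph{describes} $x_\sigma$ as the limit $\lim_{z\to 0}\lambda_\m(z)$ of a one-parameter subgroup with $\m$ in the relative interior of $\sigma$, and cites \cite[p.~37]{fu}, \cite[Prop.~3.2.2]{cls} and \cite[p.~212]{cox} for this. You instead give a self-contained construction of $x_\sigma$ as a $\C$-point of $U_\sigma=\mathrm{Spec}\,\C[\sigma^\vee\cap\Z^n]$, namely the semigroup homomorphism that is $1$ on $\sigma^\perp\cap\sigma^\vee\cap\Z^n$ and $0$ elsewhere, verify well-definedness via the face property $\sigma^\perp\cap\sigma^\vee\prec\sigma^\vee$, and then pin down $T\cdot x_\sigma=\orb(\sigma)$ by a dimension count through the orbit--cone correspondence. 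This is essentially the \emph{definition} of the distinguished point in \cite[Prop.~3.2.1]{cls}, so your argument reproduces what the paper delegates to the literature; it has the advantage of actually proving existence and the orbit identification from first principles. What you lose is the limit description $x_\sigma=\lim_{z\to 0}\lambda_\m(z)$, which the paper singles out precisely because it is the form needed immediately afterward in Lemma \ref{origin} to locate $x_\sigma$ inside the bordification $\overline{T}_\Sigma$; if you used your construction, you would still need to derive the limit formula (i.e.\ \cite[Prop.~3.2.2]{cls}) as a separate step to make the subsequent argument go through.
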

 
 The distinguished point $x_\sigma$ can be described as follows. 
 The smallest cone  $\{0\}$ of the fan $\Sigma$ corresponds to the affine toric variety $(\C^*)^n$, and the distinguished point is $(1, \cdots, 1)$ in this case. The $T$-orbit through this point gives $T$. 
 
 In general, we note that every one-parameter subgroup $\lambda: \C^* \to (\C^*)^n$ is of the form
 \[
      \lambda_\m(z)=(z^{m_1}, \cdots, z^{m_n}),
 \]
 where $\m=(m_1, \cdots, m_n)\in \Z^n$. Let $\m$ be an integral vector contained in the relative interior of the cone $\sigma$. By \cite[p. 37]{fu} \cite[Proposition 3.2.2]{cls} (see also \cite[p. 212]{cox}), the distinguished point is given by
 \[
      x_\sigma = \lim_{z\to 0} \lambda_\m(z) \in X_\Sigma.
 \]
 
 We need to identify this distinguished point $x_\sigma \in U_\sigma \subset X_\Sigma$ with a corresponding distinguished
 point in the bordification $\overline{T}_\Sigma$. 
 
 \begin{lem}\label{origin} 
      Under the identification of $X_\Sigma$ with $\overline{T}_\Sigma$ in Proposition \ref{identification3},  this  distinguished point $x_\sigma$  in  $X_\Sigma$ corresponds to the image $0_\sigma$ of the origin of $\C^n$ in $\overline{T}_\Sigma$ under the projection $\C^n \to O(\sigma)=i\Z^n \backslash \C^n /\mathrm{Span}_\C(\sigma)$.   When the orbit $O(\sigma)$ is identified with $(\C^*)^r$, where $r = \codim(\sigma) = \dim_\C (\C^n/\mathrm{Span}_\C(\sigma))$, then $0_\sigma$ corresponds to $(1, \cdots, 1)$.
 \end{lem}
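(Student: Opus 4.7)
The plan is to track the distinguished point $x_\sigma$ through the homeomorphism $X_\Sigma \cong \overline{T}_\Sigma$ of Proposition \ref{identification3}, using its description $x_\sigma = \lim_{z \to 0} \lambda_\m(z)$ for any integral vector $\m$ in the relative interior of $\sigma$. Since the homeomorphism restricts to the identity on $T = (\C^*)^n$, it is enough to compute the limit of this same sequence in $\overline{T}_\Sigma$ and identify it with $0_\sigma$.

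First, I would translate $\lambda_\m$ through the exponential identification in Equation \ref{identification2}. Writing $z = e^{-2\pi w}$ with $w \in \C$, one has $\lambda_\m(z) = (e^{-2\pi m_1 w}, \ldots, e^{-2\pi m_n w})$, which is the image of $w\m \in \C^n$ in $i\Z^n \backslash \C^n$. The limit $z \to 0$ corresponds to $\mathrm{Re}(w) \to +\infty$, so I would take the real path $w = t \in \R_{>0}$ with $t \to +\infty$, and analyze the resulting sequence $\z_t = t\m \in \R^n$.

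Next, I would verify convergence in $\overline{T}_\Sigma$ against the defining criterion. The imaginary part of $\z_t$ is zero, and its real part $\x_t = t\m$ admits the trivial decomposition $\x_t' = t\m$, $\x_t'' = 0$: the bounded-component condition is immediate, and since $\m$ lies in the relative interior of $\sigma$, so does $t\m$ for every $t > 0$, with distance to the relative boundary of $\sigma$ growing linearly in $t$. Moreover, the image of $\z_t = t\m$ under the projection $i\Z^n \backslash \C^n \to O(\sigma)$ is identically $0$ because $\m \in \mathrm{Span}_\C(\sigma)$. Hence $\z_t$ converges to $0_\sigma$ in $\overline{T}_\Sigma$, and by Proposition \ref{identification3} this limit is precisely $x_\sigma$.

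For the explicit identification with $(1, \ldots, 1)$, I would use that the sublattice $\Z^n \cap \mathrm{Span}_\C(\sigma)$ is saturated in $\Z^n$ (being the intersection of $\Z^n$ with a rational subspace of $\R^n$), yielding a $\Z$-linear splitting $\Z^n \cong (\Z^n \cap \mathrm{Span}_\C(\sigma)) \oplus \Z^r$ with $r = \codim(\sigma)$; passing to the quotient gives $O(\sigma) \cong i\Z^r \backslash \C^r \cong (\C^*)^r$ via the componentwise exponential $\z \mapsto e^{-2\pi \z}$, under which the origin maps to $(1, \ldots, 1)$. The only mildly subtle point I anticipate is verifying that the distance from $t\m$ to the relative boundary of $\sigma$ tends to infinity, which follows from homogeneity and the fact that $\m$ has strictly positive distance to that boundary inside $\mathrm{Span}_\R(\sigma)$.
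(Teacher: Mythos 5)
Your proof is correct and follows essentially the same route as the paper: both use the characterization $x_\sigma = \lim_{z\to 0}\lambda_\m(z)$ with $\m$ in the relative interior of $\sigma$, translate the one-parameter subgroup through the exponential identification so that its real part becomes the ray $t\m$, and then invoke the defining convergence criterion of $\overline{T}_\Sigma$ to conclude the limit is $0_\sigma$. You do flesh out two details the paper leaves implicit — the explicit check that the distance from $t\m$ to the relative boundary of $\sigma$ grows linearly, and the use of saturation of $\Z^n \cap \mathrm{Span}_\R(\sigma)$ to produce the splitting identifying $O(\sigma)$ with $(\C^*)^r$ — but these are refinements of the same argument rather than a different approach.
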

 
 \begin{proof}
      As mentioned before, for the trivial cone  $\sigma=\{0\}$ of the fan $\Sigma$, the distinguished point is $(1, \cdots, 1)$.   Under the identification $(\C^*)^n= i\Z^n \backslash \C^n$ in Equation \ref{identification2} on page \pageref{identification2}, the distinguished point $x_\sigma$  corresponds to the image of the origin of $\C^n$ under the projection $\C^n \to i\Z^n \backslash \C^n$.
      
      Therefore, $x_\sigma$ corresponds to $0_\sigma$ in $O(\sigma) \subset \overline{T}_\Sigma$. 
      For any nontrivial cone $\sigma \subset \Sigma$,  the distinguished point $x_\sigma$  in the orbit $\orb(\sigma)$ is equal to the limit $\lim_{z\to 0} \lambda_\m(z)$ in $X_\Sigma$, where $\m$ is an integral vector contained in the relative interior of the cone $\sigma$.
      
      We need to determine the limit  $\lim_{z\to 0}\lambda_\m(z)$ in the bordification $\overline{T}_\Sigma$.  When we identify $(\C^*)^n$ with $\R^n\times i \Z^n \backslash \R^n=i\Z^n \backslash \C^n$ as above in Equation \ref{identification2}, the complex curve \mbox{$z\mapsto \lambda_\m(z)$} ($z\in \C$)  in $i\Z^n \backslash \C^n$ is the image of a complex line in $\C^n$ with slope given by $\m$, and hence its real part  is a straight line in $\R^n$ through the origin with slope  $\m$, i.e., $t\mapsto (m_1 t, \cdots, m_n t)$, $t\in \R$, and  $\lambda_\m(z)$ is contained  in $\mathrm{Span}_\C(\sigma)$. 

      By the definition of the topology of $\overline{T}_\Sigma$ above, $\lim_{z\to 0} \lambda_\m(z)$ converges to the distinguished point $0_\Sigma$ in $O(\sigma)$, i.e., to the image of the origin of $\C^n$  in $O(\sigma)$. 
      This proves Lemma \ref{origin}. 
\end{proof}

\begin{lem}\label{coincide-special-point}
      For any cone $\sigma \in \Sigma$, a sequence $\z_j$ in $(\C^*)^n=T$ converges to the distinguished point $x_\sigma$ in  the toric variety $X_\Sigma$ if and only if it converges to the distinguished point $0_\sigma$ in the topological model $\overline{T}_\Sigma$ 
\end{lem}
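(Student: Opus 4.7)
The plan is to work entirely in the affine chart $U_\sigma$, using the explicit embedding $\varphi: (\C^*)^n \hookrightarrow \C^k$, $\t \mapsto (\t^{\m_1}, \ldots, \t^{\m_k})$, from a chosen set of semigroup generators $\m_1, \ldots, \m_k$ of $\sigma^\vee \cap \Z^n$. Recall (see \cite[p.~28]{fu}) that the distinguished point $x_\sigma$ is characterized by the semigroup homomorphism $\sigma^\vee \cap \Z^n \to \C$ sending $\m \mapsto 1$ if $\m \in \sigma^\perp$ and $\m \mapsto 0$ otherwise; in the embedding this means $\varphi(x_\sigma)$ has $i$-th coordinate equal to $1$ when $\m_i \in \sigma^\perp$ and $0$ otherwise. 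Hence $\z_j \to x_\sigma$ in $U_\sigma$ if and only if
\[
\z_j^{\m_i} \longrightarrow 1 \ \text{for every } \m_i \in \sigma^\perp, \qquad \z_j^{\m_i} \longrightarrow 0 \ \text{for every } \m_i \in (\sigma^\vee \cap \Z^n)\setminus \sigma^\perp.
\]

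Using the identification $(\C^*)^n \cong i\Z^n\backslash \C^n$ from \eqref{identification2} and writing $\z_j = \x_j + i\y_j$, we have $\z_j^{\m} = e^{-2\pi\langle \m,\x_j\rangle}\, e^{-2\pi i\langle \m,\y_j\rangle}$. Thus the convergence conditions above translate into the following two families of conditions:
\begin{enumerate}
\item[(a)] for every $\m \in \sigma^\perp \cap \sigma^\vee \cap \Z^n$, the pairing $\langle \m,\x_j\rangle$ stays bounded and $e^{-2\pi i\langle \m,\y_j\rangle} \to 1$;
\item[(b)] for every $\m \in (\sigma^\vee\cap\Z^n)\setminus\sigma^\perp$, we have $\langle \m,\x_j\rangle \to +\infty$.
\end{enumerate}
Condition~(a) is precisely the statement that the image of $\z_j$ in $O(\sigma) = i\Z^n\backslash\C^n/\mathrm{Span}_\C(\sigma)$ converges to $0_\sigma$: indeed, modding out by $\mathrm{Span}_\C(\sigma)$ in $\C^n$ corresponds dually to restricting linear functionals to $\sigma^\perp$, and the functions $\m \in \sigma^\perp \cap \Z^n$ separate points of $O(\sigma)$ and measure distance to the origin there.

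It remains to match condition~(b) with part~(1) of the definition of convergence in $\overline{T}_\Sigma$. This is the convex-geometric heart of the argument and will be the main obstacle. The claim I need is: \emph{(b) holds if and only if one can decompose $\x_j = \x_j'+\x_j''$ with $\x_j'$ in the relative interior of $\sigma$, with $\mathrm{dist}(\x_j', \partial_{\mathrm{rel}}\sigma)\to\infty$, and with $\x_j''$ bounded.} The forward direction ($\Leftarrow$) uses that every $\m \in \sigma^\vee \setminus \sigma^\perp$ is strictly positive on $\mathrm{relint}(\sigma)$ together with a compactness argument on the (finitely many) primitive facet normals of $\sigma$ and the primitive generators of the finitely generated semigroup $\sigma^\vee\cap\Z^n$, giving a uniform lower bound $\langle \m,\x_j'\rangle \geq c_\m\cdot \mathrm{dist}(\x_j',\partial_{\mathrm{rel}}\sigma)$; boundedness of $\x_j''$ then contributes only a bounded perturbation. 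The reverse direction ($\Rightarrow$) uses that $\sigma = (\sigma^\vee)^\vee$, so the finitely many primitive facet normals $\m$ of $\sigma$ (which lie in $\sigma^\vee\setminus\sigma^\perp$) cut out $\sigma$ and the condition $\langle \m,\x_j\rangle\to\infty$ on all of them forces $\x_j$ to eventually lie within bounded distance of $\mathrm{relint}(\sigma)$ and to recede from $\partial_{\mathrm{rel}}\sigma$; one then constructs $\x_j'$ as an explicit projection of $\x_j$ onto $\mathrm{Span}(\sigma)$ and absorbs the bounded discrepancy (including any component orthogonal to $\mathrm{Span}(\sigma)$, which is controlled by condition~(a)) into $\x_j''$. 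Combining this dictionary with the identification of condition~(a) above yields the equivalence claimed in the lemma.
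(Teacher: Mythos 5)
Your overall strategy is exactly the paper's route: work in the affine chart $U_\sigma$, characterize $x_\sigma$ by the $0/1$ pattern of the coordinates $\t^{\m_i}$, translate via the exponential identification $(\C^*)^n\cong i\Z^n\backslash\C^n$ into conditions on the pairings $\langle\m,\x_j\rangle$ and $\langle\m,\y_j\rangle$, and match those with the definition of the topology on $\overline{T}_\Sigma$. You also correctly identify and flesh out the convex-geometric step (equating condition (b) with the decomposition $\x_j=\x_j'+\x_j''$) that the paper merely asserts, and your sketch of both directions using facet normals and the control of the orthogonal component is sound.

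There is, however, a slip in your condition (a). For $\m\in\sigma^\perp\cap\Z^n$ you need $\z_j^\m\to 1$, and since $|\z_j^\m|=e^{-2\pi\langle\m,\x_j\rangle}$, this forces $\langle\m,\x_j\rangle\to 0$, not merely that $\langle\m,\x_j\rangle$ stays bounded. As written, (a) is strictly weaker than $\z_j^\m\to 1$, so your subsequent claim that ``condition~(a) is precisely the statement that the image of $\z_j$ in $O(\sigma)$ converges to $0_\sigma$'' is false: if $\langle\m,\x_j\rangle$ were, say, constant and nonzero, then $|\z_j^\m|$ would stay away from $1$ and the image of $\z_j$ in $O(\sigma)$ would have nonzero real part in the limit, so it would not converge to $0_\sigma$. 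The fix is local --- replace ``stays bounded'' with ``$\to 0$''. With that correction, (a) does become equivalent to convergence of the image in $O(\sigma)$ to $0_\sigma$, and your reverse-direction argument (which uses (a) to control the component of $\x_j$ orthogonal to $\mathrm{Span}_\R(\sigma)$ and absorb it into $\x_j''$) goes through as intended.
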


\begin{proof}
      We note that for the open subset $U_\sigma \subset X_\Sigma$, under the embedding of $U_\sigma \subset \C^k$ in Equation \ref{affine-toric} on page \pageref{affine-toric}, the coordinates $\t^{\m_i}$ of the distinguished point $x_\sigma$ are either 0 or 1 depending on whether the element $\m_i$ in $\sigma^\vee\cap \Z^n$ is zero or positive on $\sigma$. This implies that a sequence $\z_j \in (\C^*)^n$ converges to the distinguished point $x_\sigma$ if and only if the following conditions are satisfied:
      \begin{enumerate}
	  \item for any $\m\in \sigma^\vee\cap \Z^n$ with $\m|_\sigma >0$ it holds $\z_j^\m \to 0$ as $j\to +\infty$, 
	  \item for any  $\m\in \sigma^\vee\cap \Z^n$ with $\m|_\sigma =0$ it holds $\z_j^\m \to 1$ as $j\to +\infty$.
      \end{enumerate}

      Note that the vectors in $ \sigma^\vee\cap \Z^n$ with $\m|_\sigma \geq 0$ span the dual cone $\sigma^\vee$, i.e., linear combinations of these vectors with nonpositive coefficients give $\sigma^\vee$. In terms of the identification $(\C^*)^n= i \Z^n \backslash \C^n$, write $\z_j=\x_j + i\y_j$ with $\x_j, \y_j \in \R^n$ as in the definition of the topology of $\overline{T}_\Sigma$,  then the above condition on $\z_j$ is equivalent to the following conditions:
      \begin{enumerate}
	  \item The real part $\x_j$ can be written as  $\x_j=\x_j'+\x_j''$ such that when $j\to +\infty$,
	  \begin{enumerate}
	        \item the first part $\x_j'$ is contained in the interior of the cone $\sigma$ and its distance to the relative  boundary of $\sigma$ %(i.e., the boundary of $\sigma$ in the linear subspace spanned by $\sigma$)
	        goes to infinity,
	        \item the second part $\x_j''$ is bounded. 
	  \end{enumerate}
	  \item The image of $\z_j$ in $O(\sigma)=i\Z^n\backslash \C^n /\mathrm{Span}_\C(\sigma)$ under the projection 
	  \[
	        i\Z^n\backslash \C^n\to i\Z^n\backslash \C^n  /\mathrm{Span}_\C(\sigma)
	  \]
	  converges to the image in $O(\sigma)$ of the zero vector in $\C^n$.
      \end{enumerate} 

      By the definition of $\overline{T}_\Sigma$, this is exactly the conditions for the sequence $\z_j$ to converge to the distinguished point $0_\sigma$ in $\overline{T}_\Sigma$.  This proves Lemma \ref{coincide-special-point}. 
 \end{proof}

\noindent{\em Proof of Proposition \ref{identification3}.} \label{proof_prop_identification}

 The idea of the proof  is to use the continuous actions of $T$ on $X_\Sigma$ and $\overline{T}_\Sigma$ to extend the equivalence of convergence of interior sequences
 to the distinguished point $x_\sigma=0_\sigma$  in Lemma \ref{coincide-special-point} to other boundary points.

 Under the action of $T$, the orbit  $T\cdot 0_\sigma$ in $\overline{T}_\Sigma$ gives $O(\sigma)$. As pointed out in Proposition \ref{orbits} on page \pageref{orbits} and Proposition \ref{distinguished_point}, the orbit $T\cdot x_\sigma$ in $X_\Sigma$ gives the orbit corresponding to $\sigma$. It can be seen that the stabilizer of the distinguished point $x_\sigma \in \orb(\sigma)$ in $T= i\Z^n \backslash \C^n$ is equal to the subgroup $i\mathrm{Span}_\C(\sigma)\cap \Z^n\backslash \mathrm{Span}_\C(\sigma)$ (see \cite[Lemma 3.2.5]{cls}).
 By the definition of $\overline{T}_\Sigma$, the stabilizer of  the point $o_\sigma \in O(\sigma)$ is also equal to $i\mathrm{Span}_\C(\sigma)\cap \Z^n\backslash \mathrm{Span}_\C(\sigma)$.
 Therefore, there is a canonical identification between $\orb(\sigma)$ and $O(\sigma)$. 

 By Lemma \ref{coincide-special-point}, for any sequence $\z_j$ in $T$, $\z_j \to x_\sigma$  in $X_\Sigma$ if and only if $\z_j \to 0_\sigma$  in $\overline{T}_\Sigma$. Take any such sequence $\z_j \in (\C^*)^n$  with $\lim_{j\to +\infty} \z_j=x_\sigma$. Let   $\t_j \in (\C^*)^n$ be any converging sequence with $\lim_{j\to\infty} \t_j=\t_\infty$.
 For both the toric variety $X_\Sigma$ and the bordification $\overline{T}_\Sigma$, the continuous actions of $T$ on $X_\Sigma$ and $\overline{T}_\Sigma$ in Proposition \ref{extends}  imply that the sequence $\t_j \z_j$ converges to $\t_\infty \cdot x_\sigma$ in $X_\Sigma$, and to $\t_\infty \cdot 0_\sigma \in O(\sigma)$ in $\overline{T}_\Sigma$ respectively.
 This implies that a sequence of interior points $\z_j$ in $T$ converges to a boundary point in the orbit $\orb(\sigma)\subset X_\Sigma$ if and only if it converges to a corresponding point in $O(\sigma)\subset \overline{T}_\Sigma$.
 Since $\sigma$ is an arbitrary cone in $\Sigma$  and $\t_j$ is an arbitrary convergent sequence in $(\C^*)^n$, this  proves the topological description of toric varieties in Proposition \ref{identification3}.

%\vspace{.2in}
%\noindent{\bf Fans coming from convex polytopes}.
%\vspace{.1in}

\subsection{Fans coming from convex polytopes}

 One way to construct fans in $\R^n$ is to start with a rational convex polytope $P$ which contains the origin as an interior point. As a more detailed reference for this construction see \cite[Section 1.5]{fu}.

 Recall that a convex polytope $P$ in $\R^n$ is  the convex hull of finitely many points of $\R^n$. If the vertices of $P$ are contained in $\Z^n$, then $P$ is called a {\em rational convex polytope}\footnote{This definition is due to \cite[p. 24]{fu}. At some other places, $P$ is called  a rational convex polytope if the vertices of $P$ are contained in $\Q^n$, and  $P$ is called an integral convex polytope if the vertices of $P$ are contained in $\Z^n$.}.

 Assume that $P$ is a rational convex polytope in $\R^n$ and contains the origin as an interior point. Then each face $F$ of $P$ spans a rational  polyhedral cone 
 \[
      \sigma_F=\R_{\geq 0}\cdot F,
 \] 
 i.e., the face $F$ is a section of the cone $\sigma_F$,  and these cones $\sigma_F$ form a fan in $\R^n$, denoted by $\Sigma_P$. See for example Figure \ref{pic:fan} below. Denote the toric variety defined by the fan $\Sigma_P$ by $X_{\Sigma_P}$. Since the support of the fan $\Sigma_P$ is equal to $\R^n$, $X_{\Sigma_P}$ is compact.
 Note that for any integer $k$, the scaled polytope $kP$ is also a rational polytope and gives the same fan, $\Sigma_{kP}=\Sigma_P$. 

%\vspace{.3in}
     % \input{Bild_fan_2D}
%\vspace{.3in}

\begin{figure}[h!]
 \centering
 \includegraphics{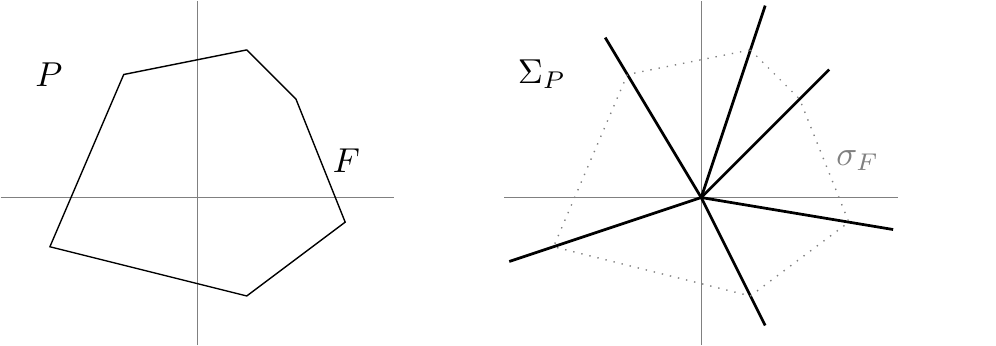}
 \caption{A rational convex polytope $P$ and its corresponding fan $\Sigma_P$ in $\R^2$} \label{pic:fan}
\end{figure}

 It is known that not every fan $\Sigma$  in $\R^n$ comes from such a rational convex polytope $P$, as the following example shows. 
 \begin{ex}\cite[p.25]{fu}
      Take the fan generated by the eight halflines through the origin and one of the following eight points: 
      \[
	  (-1, \pm 1, \pm 1), (1, -1, \pm 1), (1,  1, -1), (1,2,3).
      \]
      Then it is not possible to find eight points, one on each of the halflines, such that for each of the six cones the four corresponding generating points lie on one affine hyperplane.  
 \end{ex}

 The toric varieties defined by fans $\Sigma_P$ which do come from  rational convex polytopes $P$ as above have a simple characterization: 

 \begin{prop} \label{polytope-toric}
      A toric variety $X_\Sigma$ is a projective variety if and only if the fan $\Sigma$ is equal to the fan $\Sigma_P$ induced from a rational convex polytope $P$ containing the origin as an interior point as above.
 \end{prop}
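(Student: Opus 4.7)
The plan is to reduce to the classical characterization of projective toric varieties via \emph{normal fans} of lattice polytopes, and then pass between the face-fan description used in this paper and the normal-fan description via polar duality. Throughout I will rely on the well-known theorem (see, e.g., \cite[Ch.~3.4]{cls}, \cite[\S 1.5]{fu}) that a complete toric variety $X_\Sigma$ is projective if and only if $\Sigma$ is the inner normal fan $N_Q$ of some lattice polytope $Q \subset \R^n$; the projective embedding in that case is constructed from the lattice points of a sufficiently high dilate $kQ$, which form the global sections of an ample torus-invariant line bundle on $X_\Sigma$.

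For the \textbf{if} direction, suppose $P$ is a rational convex polytope with the origin in its interior, and consider its polar set $P^\circ = \{ y \in \R^n \mid \langle y, x\rangle \leq 1 \text{ for all } x \in P\}$. Because $0 \in \mathrm{int}(P)$, the polar $P^\circ$ is a bounded polytope, and because $P$ has rational vertices, the vertices of $P^\circ$ are rational. The face lattice of $P^\circ$ is dual to that of $P$: vertices $v$ of $P^\circ$ correspond to facets $F_v$ of $P$, and more generally faces $G$ of $P^\circ$ correspond to faces $F_G$ of $P$. A direct check shows that the inner normal cone to $P^\circ$ at a vertex $v$ is precisely the cone $\R_{\geq 0}\cdot F_v$, and in general the cones of $N_{P^\circ}$ are exactly the cones $\R_{\geq 0}\cdot F$ over faces $F$ of $P$. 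Hence $\Sigma_P = N_{P^\circ}$. After clearing denominators by rescaling $P^\circ$ by a positive integer (which does not change its normal fan), we obtain a lattice polytope whose normal fan is $\Sigma_P$, so $X_{\Sigma_P}$ is projective.

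For the \textbf{only if} direction, assume $X_\Sigma$ is projective. By the classical theorem above, there exists a lattice polytope $Q$ with $\Sigma = N_Q$. Translating $Q$ by an interior lattice point of $kQ$ (for $k$ large enough to guarantee interior lattice points exist) does not alter the normal fan, so we may assume $0 \in \mathrm{int}(Q)$; $Q$ then has rational vertices (integral after absorbing the translation, up to rescaling). Set $P = Q^\circ$. By the same polar-duality argument as above, $\Sigma_P = N_{(P^\circ)^\circ} = N_Q = \Sigma$, and $P$ is a rational polytope (in the $\Q^n$-sense) with $0$ in its interior. Rescaling $P$ by the least common multiple of the denominators of its vertices produces a polytope with vertices in $\Z^n$ (i.e.\ rational in the sense of the footnote) without changing the fan $\Sigma_P$.

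The main technical obstacle is the careful bookkeeping of rationality under polar duality: taking polars moves vertices into $\Q^n$ rather than $\Z^n$, and the origin-in-interior hypothesis is needed to keep $P^\circ$ bounded. Both issues are handled by invoking the invariance $\Sigma_{kP} = \Sigma_P$ under positive scaling, together with a preliminary translation in the ``only if'' step to place the origin in the interior of the ample polytope. Once these normalizations are in place, the identification $\Sigma_P = N_{P^\circ}$ is essentially a restatement of the duality between faces of $P$ and faces of $P^\circ$, and the projectivity statement follows from the classical theorem.
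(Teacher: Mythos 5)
Your proof is correct and follows essentially the same route as the paper: reduce to the classical normal-fan characterization of projective toric varieties (Cox, Fulton), and use polar duality $(P^\circ)^\circ = P$ to translate between the face fan $\Sigma_P$ of this paper and the normal fan $\Sigma_{P^\circ}$ appearing in those references. One small caveat: you define the polar as $\{y \mid \langle y,x\rangle \leq 1\}$ while the paper uses $\{y \mid \langle y,x\rangle \geq -1\}$, which differ by a sign when $P$ is not centrally symmetric; with your sign the cones $\R_{\geq 0}\cdot F_v$ are the \emph{outer} rather than inner normal cones of $P^\circ$, but this does not affect the identification of the fan as a whole nor the conclusion.
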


 In \cite[p. 26]{fu} and \cite[p. 219]{cox}, a rational polytope dual to $P$ is used to construct a toric variety. 
 
 \begin{defi}
      The {\em polar set} $P^\circ$  of a convex polytope $P$ is defined  by
      \begin{equation}\label{polar}
	  P^\circ=\{v\in \R^n\mid \langle v, u\rangle \geq -1, \text{ for all \ } u \in P \}.
      \end{equation}
 \end{defi}

 \begin{rem} \ 
      \begin{itemize}
       \item When $P$ is a rational convex polytope containing the origin as an interior point, then $P^\circ$ is also a rational convex polytope containing the origin as an interior point.
       \item When $P$ is symmetric with respect to the origin, then $P^\circ$ is equivalent to another common definition of polar set: 
       \[
	  \{v\in \R^n\mid \langle v, u\rangle \leq 1, \text{ for all \ } u \in P\}.
       \]
      \end{itemize}

 \end{rem}

 The following fact is well-known, for a reference see for example \cite[p. 24]{fu} or \cite[Lemma 3.7]{hsw} for a proof. 

\begin{prop}\label{duality} 
      There is a duality between $P$ and $P^\circ$ which is given by an one-to-one correspondence between the set of faces of $P$ and the set of faces of $P^\circ$ which reverses the inclusion relation.
\end{prop}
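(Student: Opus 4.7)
The plan is to construct an explicit inclusion-reversing bijection between the face lattices, using the supporting-hyperplane description of faces. Since the definition of $P^\circ$ in the paper uses the inequality $\langle v,u\rangle\geq -1$, for each (nonempty) face $F$ of $P$ I would define
\[
F^\diamond \;=\; \{\,v\in P^\circ \mid \langle v,u\rangle = -1 \text{ for all } u\in F\,\},
\]
together with the conventions $P^\diamond = \emptyset$ and $\emptyset^\diamond = P^\circ$. The first thing to check is that $F^\diamond$ really is a face of $P^\circ$: picking any $u\in F$, the hyperplane $H_u=\{v:\langle v,u\rangle=-1\}$ is a supporting hyperplane of $P^\circ$ since $\langle v,u\rangle\geq -1$ for all $v\in P^\circ$ by definition, and $F^\diamond$ is the intersection of $P^\circ$ with finitely many such supporting hyperplanes, hence a face.

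Next I would verify the inclusion-reversing property, which is immediate: if $F_1\subseteq F_2$, then every $v\in P^\circ$ that satisfies $\langle v,u\rangle=-1$ on $F_2$ automatically satisfies it on $F_1$, so $F_2^\diamond\subseteq F_1^\diamond$. The core of the proof is then involutivity $(F^\diamond)^\diamond = F$, which will give bijectivity. One inclusion is formal: any $u\in F$ satisfies $\langle v,u\rangle=-1$ for every $v\in F^\diamond$ by definition of $F^\diamond$, so $F\subseteq (F^\diamond)^\diamond$. For the reverse inclusion I would invoke the bipolar identity $(P^\circ)^\circ=P$, which holds because $P$ is a closed convex set containing the origin in its interior; combined with the observation that a proper face $F$ of $P$ is cut out by some supporting hyperplane $H = \{u:\langle v_0,u\rangle=-1\}$ corresponding to a vertex $v_0$ (or more generally a face) of $P^\circ$, this lets me identify $(F^\diamond)^\diamond$ with the intersection of $P$ with the affine span of $F$, which equals $F$.

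To finish, I would check that the map $F\mapsto F^\diamond$ is actually surjective onto faces of $P^\circ$: applying the construction to $P^\circ$ in place of $P$ gives a map $G\mapsto G^\diamond$ from faces of $P^\circ$ to faces of $(P^\circ)^\circ=P$, and the two involutivity statements $(F^\diamond)^\diamond=F$ and $(G^\diamond)^\diamond=G$ show these maps are mutual inverses. A short dimension count $\dim F+\dim F^\diamond = n-1$ (for proper faces) can be added to confirm the correspondence sends vertices to facets and vice versa, matching the picture in Figure \ref{pic:fan}.

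The main obstacle I expect is the clean verification of $(F^\diamond)^\diamond\subseteq F$; this is where one needs the full strength of the bipolar theorem together with a careful argument that a point $u\in P\setminus F$ admits some $v\in P^\circ$ with $\langle v, \cdot\rangle\equiv -1$ on $F$ but $\langle v,u\rangle>-1$. This amounts to a separation argument in the quotient by the affine span of $F$, and the hypothesis that $0$ lies in the interior of $P$ (so that all the dualities are well-posed and $P^\circ$ is itself a bounded polytope with $0$ in its interior) is used crucially here.
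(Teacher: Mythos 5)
The paper does not actually prove this proposition: it states that the fact is well-known, cites \cite[p.~24]{fu} and \cite[Lemma 3.7]{hsw}, and then only records the description of the correspondence $F\mapsto F^\circ$ via the two conditions $\langle x,y\rangle=-1$ on $F\times F^\circ$ and $\dim F+\dim F^\circ=n-1$. Your $F^\diamond$ agrees with that $F^\circ$, and your outline is the standard direct proof, so you are supplying an argument the paper leaves to references. The one step you flag as the obstacle, $(F^\diamond)^\diamond\subseteq F$, is indeed the crux, but the separation you sketch closes it in two lines and you do not need to route through the affine span of $F$: a proper face $F$ is cut out by a supporting hyperplane $H=\{u:\langle a,u\rangle=c\}$ with $c>0$ (this is exactly where $0\in\mathrm{int}(P)$ enters), so $v_0\coloneqq -a/c$ lies in $P^\circ$ and satisfies $\langle v_0,u\rangle=-1$ for all $u\in F$, hence $v_0\in F^\diamond$; for any $u_0\in(F^\diamond)^\diamond$ one then gets $\langle v_0,u_0\rangle=-1$, i.e.\ $u_0\in H$, so $u_0\in P\cap H=F$. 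Beyond this, only the bipolar identity $(P^\circ)^\circ=P$ is needed, which you already invoke for the easy inclusion. Two small remarks: your conventions $P^\diamond=\emptyset$ and $\emptyset^\diamond=P^\circ$ are in fact forced by the definition (since $0\in P$, no $v$ satisfies $\langle v,0\rangle=-1$), so they need not be imposed separately; and the dimension identity $\dim F+\dim F^\diamond=n-1$ is a genuine extra fact used in the paper's description to characterize $F^\circ$ uniquely, not a formal consequence of involutivity, but it is not needed to prove the proposition as stated.
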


The correspondence is as follows: Let $F$ be a face of $P$. Then there is exactly one face $F^\circ$ of $P^\circ$, called the {\em dual face of $F$}, which satisfies the following two conditions:
\begin{enumerate}
      \item for any $x\in F$ and $y\in F^\circ$ it holds: $\langle x, y\rangle= -1$,
      \item $\dim(F) + \dim(F^\circ) = n-1$. 
\end{enumerate}

 \begin{proof}[Proof of Proposition \ref{polytope-toric}]\ \\
      The fan $\Sigma_{P^0}$ associated with the dual polytope $P^\circ$ above is called the {\em normal fan}  of the polytope $P$ and is defined for example in \cite[pp. 217-218]{cox} or  \cite[Proposition, p. 26]{fu}. The toric varieties there are defined by the normal fans $\Sigma_{P^0}$ of $P$ and not by the fans $\Sigma = \Sigma_P$ as in this paper. But since the polar set of $P^\circ$ is equal to $P$, $(P^\circ)^\circ = P$, the above statement in Proposition \ref{polytope-toric} is equivalent to that in \cite[Theorem 12.2]{cox} which is stated in terms of normal fans of rational polytopes. 
 \end{proof}

%\vspace{.2in}
%\noindent{\bf Real and nonegative part of toric varieties and the moment map.}
%\vspace{.1in}

\subsection{Real and nonnegative part of toric varieties and the moment map}

 For every toric variety $X_\Sigma$, there is the notion of the nonnegative part $X_{\Sigma, \geq 0}$ (see also \cite[p. 78]{fu}, \cite[\S 1.3]{od1} and \cite[\S 6]{so}).

 In $\C^*$, the real part is $\R^*=\R_{> 0} \cup \R_{< 0}$, and in the complex torus $(\C^*)^n$, the real part is $(\R^*)^n$, which has $2^n$-connected components. The positive part of $\C^*$ is $\R_{>0}$, and the positive part of $(\C^*)^n$  is $(\R_{>0})^n$.

 Under the identification (Equation \ref{identification2} on page \pageref{identification2})
 \[
      (\C^*)^n\cong i\Z^n\backslash \C^n= \R^n \times i\Z^n\backslash \R^n
 \] 
the positive part $(\R_{>0})^n$ corresponds to $\R^n \times {i 0}\cong \R^n$.

 \begin{defi} \cite[Definition 6.2]{so}
      For any toric variety $X_\Sigma$, the closure of the positive part $(\R_{>0})^n$ is called the {\em nonnegative part} of $X_\Sigma$, denoted by $X_{\Sigma, \geq 0 }$. 
 \end{defi}

 Under the identification in Proposition \ref{identification3}, $X_{\Sigma, \geq 0}$ can be described as follows:

 \begin{prop}\label{real-orbit}
      For any fan $\Sigma$ of $\R^n$, the nonnegative part $X_{\Sigma, \geq 0}$ is homeomorphic to the space 
      \begin{equation*}
	  \overline{\R^n}_\Sigma=\R^n \cup \coprod_{\substack{\sigma \in \Sigma, \\ \sigma\neq \{0\}}} \R^n/\mathrm{Span}_\R(\sigma)
      \end{equation*}
      with the following topology: An  unbounded sequence $\x_j \in \R^n$ converges to a boundary point $\x_\infty$ in $\R^n/\mathrm{Span}_\R(\sigma)$ for a cone $\sigma$ if and only if one can write $\x_j=\x_j'+ \x_j''$ such that the following conditions are satisfied:
      \begin{enumerate}
	  \item when $j\to +\infty$, $\x_j'$ is contained in the cone $\sigma$ and its distance to the relative boundary of $\sigma$ goes to infinity, \item $\x_j''$ is bounded,
	  \item the image of $\x_j$ in  $\R^n/\mathrm{Span}_\R(\sigma)$ under the projection $\R^n \to \R^n/\mathrm{Span}_\R(\sigma)$ converges to $\x_\infty$.
      \end{enumerate}
 \end{prop}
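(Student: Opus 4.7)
The strategy is to transfer the question from $X_{\Sigma,\geq 0}$ to the topological model $\overline{T}_\Sigma$ via Proposition \ref{identification3} and then compute the closure of the positive part $(\R_{>0})^n$ inside $\overline{T}_\Sigma$. Under the identification $(\C^*)^n \cong i\Z^n \backslash \C^n = \R^n \times i\Z^n \backslash \R^n$ from Equation \ref{identification2}, the positive part $(\R_{>0})^n$ corresponds precisely to the real slice $\R^n \times \{0\}$. The plan is to show that its closure in $\overline{T}_\Sigma$ is exactly the claimed set $\overline{\R^n}_\Sigma$, and that the subspace topology agrees with the topology described by the stated convergence criterion.

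For the set-theoretic step, I would fix a cone $\sigma \in \Sigma$ and compute the intersection of the closure with the boundary component $O(\sigma)=i\Z^n \backslash \C^n / \mathrm{Span}_\C(\sigma)$. Consider the composition $\R^n \hookrightarrow i\Z^n \backslash \C^n \to O(\sigma)$: two real vectors $\x_1,\x_2$ share the same image if and only if $\x_1 - \x_2 \in i\Z^n + \mathrm{Span}_\C(\sigma)$. Writing $\mathrm{Span}_\C(\sigma) = \mathrm{Span}_\R(\sigma) + i\,\mathrm{Span}_\R(\sigma)$ and separating real and imaginary parts, this condition simplifies to $\x_1 - \x_2 \in \mathrm{Span}_\R(\sigma)$. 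Hence the image of $\R^n$ in $O(\sigma)$ is canonically identified with $\R^n/\mathrm{Span}_\R(\sigma)$, the ``positive real'' locus of the boundary torus $O(\sigma)\cong (\C^*)^{n-\dim\sigma}$, and this locus is closed in $O(\sigma)$.

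Next, I would match the convergence criteria. Applying the definition of the topology on $\overline{T}_\Sigma$ to a purely real sequence $\z_j = \x_j \in \R^n$: condition (1) on the decomposition $\x_j = \x_j' + \x_j''$ is verbatim conditions (1) and (2) of the proposition, while condition (2) on convergence of the image in $O(\sigma)$ reduces, after the identification above, to convergence of the image of $\x_j$ under the projection $\R^n \to \R^n/\mathrm{Span}_\R(\sigma)$, which is condition (3). Conversely, every point $\x_\infty + \mathrm{Span}_\R(\sigma) \in \R^n/\mathrm{Span}_\R(\sigma)$ is realized as a limit, e.g.\ by the ray $\x_j = \x_\infty + j v$ for any $v$ in the relative interior of $\sigma$, so the closure meets each claimed boundary stratum surjectively. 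Combined with the previous paragraph, this identifies the underlying set of the closure with $\overline{\R^n}_\Sigma$ and shows that the stated sequential description is correct.

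The main subtlety is ensuring that no extraneous limit points occur and that the subspace topology really coincides with the one described by the given convergence criterion. The first point follows because a limit of points in the closed subset $\R^n/\mathrm{Span}_\R(\sigma)\subset O(\sigma)$ must again lie in that subset. The second point follows because $\overline{T}_\Sigma$ is Hausdorff and first countable (each $O(\sigma)$ is a manifold, glued in the way described after Equation \ref{orbit}), so convergence of sequences determines the topology. This is the step that requires the most care, but once it is in place the proposition follows directly from the explicit form of the convergence criterion on $\overline{T}_\Sigma$.
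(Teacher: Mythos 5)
Your proposal is correct, but it takes a genuinely different route from the paper. The paper does not argue Proposition~\ref{real-orbit} directly: it simply cites the detailed treatment in \cite[pp.~2--6]{am}, noting that the real-case bordification there is what \emph{motivated} the complex model $\overline{T}_\Sigma$ of Proposition~\ref{identification3}. You invert this: taking $\overline{T}_\Sigma \cong X_\Sigma$ as established, you identify $(\R_{>0})^n$ with the real slice $\R^n \times \{0\}$ in $i\Z^n\backslash\C^n$ and compute its closure orbit by orbit. Your computation that the image of $\R^n$ in $O(\sigma) = i\Z^n\backslash\C^n/\mathrm{Span}_\C(\sigma)$ is precisely $\R^n/\mathrm{Span}_\R(\sigma)$ (by splitting $\mathrm{Span}_\C(\sigma) = \mathrm{Span}_\R(\sigma) \oplus i\,\mathrm{Span}_\R(\sigma)$ and taking real parts) is right, and it is closed in $O(\sigma)$ since $\R_{>0}$ is closed in $\C^*$; the matching of the two sequential convergence criteria for purely real sequences is then verbatim, and the surjectivity-onto-each-stratum step via the ray $\x_j = \x_\infty + jv$ is fine. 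The final Hausdorff-plus-first-countable remark is what makes the sequential description actually determine the topology, and it does hold since $\overline{T}_\Sigma \cong X_\Sigma$ is metrizable; it would be cleaner to say explicitly that this metrizability is inherited through Proposition~\ref{identification3}. The upshot: your argument is self-contained within the paper and logically non-circular (Lemma~\ref{coincide-special-point} and Proposition~\ref{identification3} are proved without appeal to the real case), whereas the paper's citation route gets the real case independently and more elementarily, without invoking the complex torus at all. There is no gap, just a trade-off between self-containment and dependence on the heavier complex machinery.
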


 This proposition was explained in detail and proved in \cite[pp. 2-6]{am} and motivated Proposition \ref{identification3} above.

 If we denote $\R^n/\mathrm{Span}_\R(\sigma)$ by $O_\R(\sigma)$, then for any two cones $\sigma_1, \sigma_2$, it holds that $O_\R(\sigma_1)$ is contained in the closure of $O_\R(\sigma_2)$ if and only if $\sigma_2$ is a face of $\sigma_1$. Therefore, the nonnegative part $X_{\Sigma, \geq 0}$ can be rewritten as \begin{equation}\label{real-orbit-2}
      X_{\Sigma, \geq 0}= \R^n \cup \coprod_{\substack{\sigma \in \Sigma, \\ \sigma\neq \{0\}}} O_\R(\sigma).
 \end{equation}
 Consequently, we have 
 \begin{cor} 
      The translation action of $\R^n$ on $\R^n$ extends to a continuous action on $X_{\Sigma, \geq 0}$, and the  decomposition of  $X_{\Sigma, \geq 0}$ in Equation \ref{real-orbit-2} is the decomposition  into $\R^n$-orbits.  This decomposition of the nonnegative part of the toric variety $X_{\Sigma, \geq 0}$  is a cell complex dual to the fan $\Sigma$. If $\Sigma=\Sigma_P$ for a rational convex polytope $P$ containing the origin as an interior point, then this cell complex structure is isomorphic to the cell structure of the polar set $P^\circ$. 
 \end{cor}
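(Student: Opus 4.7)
\emph{Plan.} The three assertions are naturally proved in sequence, each exploiting the convergence criterion for $X_{\Sigma,\geq 0}$ given in Proposition \ref{real-orbit}.

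For the continuity of the translation action, I would adapt the argument of Lemma \ref{translation} to the real setting. Suppose $\x_j \to \x_\infty \in O_\R(\sigma)$ with the decomposition $\x_j=\x_j'+\x_j''$ from Proposition \ref{real-orbit}, and let $\y \in \R^n$. Writing $\x_j+\y = \x_j'+(\x_j''+\y)$ leaves the first summand unchanged (so it still sits in $\sigma$ at unbounded distance from the relative boundary) and keeps the second summand bounded; projecting to $O_\R(\sigma) = \R^n/\mathrm{Span}_\R(\sigma)$ shows the shifted sequence converges to $\x_\infty + \bar{\y}$, where $\bar{\y}$ is the image of $\y$. This establishes continuity on all of $X_{\Sigma,\geq 0}$. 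For the orbit structure, each $O_\R(\sigma) = \R^n/\mathrm{Span}_\R(\sigma)$ is manifestly a single $\R^n$-orbit with stabilizer $\mathrm{Span}_\R(\sigma)$, and $\R^n$ itself is the open orbit indexed by $\sigma = \{0\}$, so \eqref{real-orbit-2} is precisely the orbit decomposition.

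For the dual-cell-complex claim, I would record that each $O_\R(\sigma) \cong \R^{n-\dim\sigma}$ is an open cell of dimension $n - \dim\sigma$, and then verify the incidence relations. Using Proposition \ref{real-orbit} in the forward direction: any limit point of a sequence $\x_j \in O_\R(\sigma)$ lies in some $O_\R(\tau)$ where the unbounded part $\x_j'$ escapes into a cone containing $\sigma$ as a face, forcing $\sigma \subseteq \tau$. Conversely, if $\sigma$ is a face of $\tau$, picking a vector $\x' \in \mathrm{relint}(\tau)$ transverse to $\mathrm{Span}_\R(\sigma)$ and scaling $t\x'$ with $t\to\infty$ shows every point of $O_\R(\tau)$ lies in $\overline{O_\R(\sigma)}$. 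The resulting reversal of dimension and face relations is by definition the dual cell complex to $\Sigma$.

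Finally, for the identification with $P^\circ$, I would combine two bijections. The cones of $\Sigma_P$ other than $\{0\}$ are in bijection with the faces of $P$ via $F \mapsto \sigma_F = \R_{\geq 0}\cdot F$, with $\dim\sigma_F = \dim F + 1$ (because $0$ lies in the interior of $P$, so no face of $P$ contains $0$). By Proposition \ref{duality}, the map $F\mapsto F^\circ$ is an inclusion-reversing bijection between the faces of $P$ and those of $P^\circ$, with $\dim F + \dim F^\circ = n-1$. Composing, a cone $\sigma_F$ is sent to the face $F^\circ$ of $P^\circ$, and $\dim O_\R(\sigma_F) = n - \dim\sigma_F = n - \dim F - 1 = \dim F^\circ$. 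The trivial cone $\{0\}$ is matched with $\mathrm{relint}(P^\circ)$, whose dimension is $n = \dim O_\R(\{0\})$. Face relations match: $F_1 \subseteq F_2$ iff $\sigma_{F_1}$ is a face of $\sigma_{F_2}$ iff (by the previous paragraph) $O_\R(\sigma_{F_2}) \subseteq \overline{O_\R(\sigma_{F_1})}$, which by Proposition \ref{duality} is equivalent to $F_2^\circ \subseteq F_1^\circ$.

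The delicate point is the third claim, if one reads ``isomorphic'' in the strong sense of a homeomorphism carrying the stratification of $X_{\Sigma_P,\geq 0}$ onto the face stratification of $P^\circ$. The combinatorial bijection above is straightforward, but to promote it to a homeomorphism one must glue cell-by-cell across strata that sit at infinity on the toric side and on the boundary on the polar side. A natural route is to use the Minkowski functional of $P^\circ$ to map $\R^n$ radially onto $\mathrm{relint}(P^\circ)$ and then check, via Proposition \ref{real-orbit} together with the face structure of $P^\circ$, that sequences escaping into $\mathrm{relint}(\sigma_F)$ are sent to sequences converging into $\mathrm{relint}(F^\circ)$; continuity at the boundary then reduces to the compatibility of the two face-stratifications, which is exactly the combinatorial isomorphism just established.
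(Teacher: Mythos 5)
Your proof is correct and follows the route the paper implies; the paper in fact states this corollary without a written proof, as a direct consequence of Proposition \ref{real-orbit}, the closure relation (``$O_\R(\sigma_1)\subseteq\overline{O_\R(\sigma_2)}$ if and only if $\sigma_2$ is a face of $\sigma_1$'') recorded in the sentence immediately preceding the corollary, and the face duality of Proposition \ref{duality}. Your fleshing out of the orbit-closure incidence and the composition of the bijections $F\mapsto\sigma_F$ and $F\mapsto F^\circ$ correctly supplies the details the paper leaves implicit, and your closing observation --- that upgrading the combinatorial face-lattice isomorphism with $P^\circ$ to a stratified homeomorphism needs extra work --- is apt, since this is precisely what the paper's subsequent moment-map proposition provides.

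One minor imprecision worth tightening: when verifying $O_\R(\tau)\subseteq\overline{O_\R(\sigma)}$ you consider the sequence $t\x'$ in $\R^n$, but Proposition \ref{real-orbit} only characterizes convergence of sequences from $\R^n$, not from the boundary stratum $O_\R(\sigma)$. What is needed is a sequence \emph{in} $O_\R(\sigma)$ converging into $O_\R(\tau)$. This is easily repaired: project the translates $\p+t\x'$ to $O_\R(\sigma)=\R^n/\mathrm{Span}_\R(\sigma)$ and use the continuity of the translation action you established in your first paragraph (together with a diagonal argument), or more economically, inherit the orbit-closure relations directly from the ambient complex toric variety $X_\Sigma$, where they are part of the standard orbit-cone correspondence of Proposition \ref{orbits}.
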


 Using the moment map for projective toric varieties,  we can realize this cell complex of $X_{\Sigma, \geq 0}$ and hence the compactification $\overline{\R^n}_\Sigma$  by a bounded  convex polytope: 

 Let $P$ be a rational convex polytope containing the origin as an interior point, and $X_{\Sigma_P}$ the associated projective variety.  By definition, each cone $\sigma$ of $\Sigma_P$ corresponds to a unique face $F_\sigma$ of $P$, which gives by Proposition \ref{duality} a dual face $F^\circ_\sigma$ of the polar set  $P^\circ$. 
 %Then the result on the moment map $\mu$ of $X_{\Sigma_P}$ \cite[p. 94]{od1} \cite[\S 4.2]{fu} \cite[\S 8]{so} \cite[Theorem 1.2]{js} can be stated as follows.
 
 \begin{prop}
      The moment map induces a homeomorphism
      \begin{equation*}\label{moment}
	  \mu: X_{\Sigma_P, \geq 0}\to P^\circ
      \end{equation*} 
      such that for every cone $\sigma \in \Sigma_P$, the positive part of the orbit $O(\sigma)$ as a complex torus, or equivalently  the orbit $O_\R(\sigma)$ in Proposition \ref{real-orbit},  is mapped  homeomorphically to the relative interior of the face $F^\circ_\sigma$ corresponding to the cone $\sigma$. 
 \end{prop}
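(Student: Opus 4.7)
The plan is to deduce this proposition from the classical moment map theorem for projective toric varieties, as formulated in \cite[Ch.~4.2]{fu} or \cite[Thm.~12.2.5]{cls}: if $Q$ is a rational convex polytope whose normal fan is $\Sigma$, then the moment map associated to the projective embedding of $X_\Sigma$ (via the lattice points of a sufficiently high integer multiple of $Q$) restricts to a homeomorphism from $X_{\Sigma, \geq 0}$ onto $Q$, carrying each real orbit $O_\R(\tau)$ onto the relative interior of the face of $Q$ whose inner normal cone is $\tau$. So the task reduces to identifying the polytope $Q$ in our setting and matching up the orbit-face correspondence.

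The crux is therefore to identify the fan $\Sigma_P$, constructed here by taking $\R_{\geq 0} \cdot F$ over the faces $F$ of $P$, with the normal fan of the polar set $P^\circ$. I would use the duality of Proposition~\ref{duality}. For each face $F$ of $P$ with dual face $F^\circ$ of $P^\circ$, the defining conditions $\langle x, y\rangle = -1$ for $x \in F$, $y \in F^\circ$, together with $\langle x, y\rangle \geq -1$ for all $x \in P$, $y \in P^\circ$, show that $F$ spans (as an $\R_{\geq 0}$-cone) precisely the inner normal cone of $F^\circ$ inside $P^\circ$. Consequently $\sigma_F = \R_{\geq 0} \cdot F$ coincides with the inner normal cone of $F^\circ$, so $\Sigma_P$ is exactly the normal fan of $P^\circ$.

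With this identification in hand, applying the classical theorem to $Q = P^\circ$ (after scaling by a positive integer to make it a lattice polytope, which affects neither the normal fan nor the toric variety $X_{\Sigma_P}$) yields the desired homeomorphism $\mu: X_{\Sigma_P, \geq 0} \to P^\circ$. The orbit-face statement then follows immediately: the classical theorem sends $O_\R(\sigma_F)$ to the relative interior of the face of $P^\circ$ whose inner normal cone is $\sigma_F$, and by the previous step this face is precisely $F^\circ$.

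The main obstacle I anticipate is bookkeeping with sign and duality conventions. The polar set used here is defined with $\langle v, u\rangle \geq -1$ rather than the more common $\langle v, u\rangle \leq 1$, and the cited references vary between inner and outer normal fan conventions when stating the moment map theorem. A careful reconciliation --- verifying that $\sigma_F$ really matches the inner normal cone of $F^\circ$ in the sense used by the reference, and that the moment map's natural target under these sign choices is $P^\circ$ rather than $-P^\circ$ --- carries essentially all the combinatorial content of the argument. Once this matching is pinned down, the proposition becomes a direct translation of the classical moment map theorem together with the orbit description in Propositions~\ref{identification3} and~\ref{real-orbit}.
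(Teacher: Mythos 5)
Your proposal is correct and matches the approach the paper implicitly relies on: the paper states the result and defers to the classical moment-map theorem via citations (Oda, Fulton, Cox--Little--Schenck, Sottile), having already recorded in the proof of Proposition~\ref{polytope-toric} that $\Sigma_P$ is the normal fan of $P^\circ$ (equivalently $\Sigma_{P^\circ}$ is the normal fan of $P$, since $(P^\circ)^\circ = P$). Your explicit verification via Proposition~\ref{duality} that $\sigma_F = \R_{\geq 0}\cdot F$ is the inner normal cone of $F^\circ$ in $P^\circ$ --- together with the remark that rescaling $P^\circ$ to a lattice polytope changes neither the fan nor the toric variety --- is exactly the bookkeeping the paper leaves to the reader and to the cited sources.
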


 For more details about the moment map and the induced homeomorphism see \cite[p. 94]{od1}, \cite[\S 4.2]{fu}, \cite[\S 8]{so} and \cite[Theorem 1.2]{js}.

\section{Polyhedral metrics}\label{poly}

 In this section, we recall the definition of polyhedral norms on $\R^n$. They are rather  special in view of the   Minkowski  geometry of  normed real vector spaces $\R^n$ and the Hilbert geometry  of bounded convex subsets of real vector spaces.

 Let $|\!| \cdot |\!|$ be  an  {\em asymmetric norm} on $\R^n$, i.e., a function $|\!| \cdot |\!|: \R^n \to \R_{\geq 0}$ satisfying:
 \begin{enumerate}
      \item For any $x\in \R^n$, if $|\!| x |\!|=0$, then $x=0$.
      \item For any $\alpha \geq 0$ and $x\in \R^n$, $|\!| \alpha x |\!|=\alpha |\!| x |\!|.$
      \item For any two vectors $x, y\in \R^n$, $ |\!| x + y|\!| \leq |\!| x |\!|+ |\!| y |\!|$.
 \end{enumerate}

 In particular, $|\!| x |\!|$ and $|\!| -x |\!|$ may not be equal to each other. If the second condition is replaced by the stronger condition: $|\!| \alpha x |\!|=|\alpha| |\!| x |\!|$ for all $\alpha \in \R$, then $|\!| \cdot |\!|$ is symmetric and is a usual norm on $\R^n$.

 Normed vector spaces have been extensively studied. They are also called Minkowski geometry in \cite{th}.
 Asymmetric norms on vector spaces have also been studied systematically, see \cite{cob}. In terms of their connection with convex domains below, they are natural. 

 Given an asymmetric  norm on $\R^n$, the unit ball $B_{|\!| \cdot |\!|}$ of  $|\!| \cdot |\!|$,
 \[
      B_{|\!| \cdot |\!|}=\{x\in \R^n\mid |\!| x|\!| \leq 1\},
 \]
 is a closed convex subset of $\R^n$ which contains the origin as an interior point. Conversely, given any convex closed subset $P$ of $\R^n$ which contains the origin as an interior point, we can define the {\em Minkowski functional} on $\R^n$ by 
 \begin{equation*}
      |\!| x|\!|_P=\inf\{\lambda >0\mid x\in \lambda P\}.
 \end{equation*}

 It can be checked easily that $|\!| \cdot |\!|_P$ defines an asymmetric norm on $\R^n$. If $P$ is symmetric with respect to the origin, i.e., $-P=P$, then $|\!| x|\!|_P$ is a norm on $\R^n$. 

 It is also easy to see that the unit ball of $|\!| \cdot |\!|_P$ is equal to $P$. Since any asymmetric norm $|\!| \cdot|\!|$ on $\R^n$ is uniquely determined by its unit ball, it is of the form $|\!| \cdot|\!|_P$ for some closed convex domain $P$ in $\R^n$ containing the origin in its interior. 

 \begin{defi}
      When $P$ is a polytope, the asymmetric norm $|\!| \cdot |\!|_P$ is called a {\em polyhedral norm}. If $P$ is a rational polytope with respect to the integral structure $\Z^n \subset \R^n$, the norm $|\!| \cdot |\!|_P$ is also called a {\em rational polyhedral norm}.
 \end{defi}

 \begin{rem}[Connections to the Minkowski and Hilbert geometry]
      This interplay between convex subsets of $\R^n$ and norms on $\R^n$ plays a foundational role in the convex analysis  of Minkowski geometry, see for example \cite{gru} and \cite{th}. If the lattice $\Z^n \subset \R^n$ is taken into account, connections with number theory and counting of lattices points are established and the structure becomes richer. The geometry of numbers relies crucially on these connections, see also \cite{grl} and \cite{ba}.

      There is another metric space associated with a convex domain $\Omega$ of $\R^n$. It is the domain $\Omega$ itself equipped with the Hilbert metric defined on it. When $\Omega$ is the unit ball of $\R^2$, this is the Klein's model of the hyperbolic plane.  In general, the Hilbert metric is a complete metric on  $\Omega$ defined through the cross-ratio. See \cite{del} for details. Since $\Omega$ is diffeomorphic to $\R^n$, the Hilbert metric induces a metric on $\R^n$. 

      When $\Omega$ is the interior of a convex polytope $P$,  the Hilbert metric on $\Omega$ is quasi-isometric to a polyhedral norm \cite{be} \cite{ve}. The polyhedral Hilbert metric associated with a polytope  $P$  is isometric to a normed vector space if and only if the polytope $P$ is the simplex \cite[Theorem 2]{fk}. Furthermore,  polyhedral Hilbert metrics  have also special isometry groups \cite{lw}. See also \cite{ln} for other special properties of these Hilbert metrics.

      These discussions show that polyhedral norms on $\R^n$, in particular rational polyhedral norms, are very special in the context of the Minkowski  geometry \cite{th} and the Hilbert geometry \cite{del}.
\end{rem}

\section{Horofunction compactification of metric spaces}\label{horo-section}

 In this section we recall briefly the horofunction compactification of metric spaces, which was first introduced in \cite[\S 1.2]{gr}.

 Let $(X, d)$ be a proper metric space, for example, a locally compact metric space. The metric $d(\cdot, \cdot)$ can be asymmetric, i.e., it satisfies all conditions of a usual metric except for the symmetry: $d(x, y) \neq d(y, x)$ possible. Such metrics arise naturally in view of polyhedral norms on $\R^n$ as we saw in the previous subsection.

 Let $C(X)$ be the space of continuous functions on $X$ with the compact-open topology. Let $\widetilde{C}(X)$ be the quotient space $C(X)/\{ \text{constant functions}\}$. Denote the image of a function $f\in C(X)$ in $\widetilde{C}(X)$ by $[f]$.

 Define a map 
 \begin{equation*}
      \psi: X \to \widetilde{C}(X), \quad x\mapsto [d(\cdot, x)].
 \end{equation*}

 If we fix a basepoint $x_0\in X$, then we can consider functions normalized to take value 0 at $x_0$, and get a map
 \begin{equation*}
      \psi: X \to C(X), \quad x \mapsto d(\cdot, x)-d(x_0, x).
 \end{equation*}

 It can be shown (see \cite[\S 3]{bgs}) that the closure of $\psi(X)$ in $X$ is compact and that if $X$ is a geodesic metric space and $d$ is symmetric with respect to convergence\footnote{Symmetric with respect to convergence means that for any sequence $(x_n)_n$ and $x \in X$ it holds: $d(x_n, x) \lora 0$ if and only if $d(x, x_n) \lora 0$.}, then $\psi$ is an embedding. % and  \cite[Chap. 3]{sch}). 
 This compact closure is called the {\em horofunction compactification} of $X$ and denoted by $\overline{X}^{hor}$. Functions in the boundary $\overline{X}^{hor} \setminus X$ are called {\em horofunctions}  of $X$. Limits of points $\psi(x_j)$  in the horofunction compactification $\overline{X}^{hor}$  when $x_j$ moves along an almost geodesic are called  {\em Busemann functions}. 

 \begin{rem}
      There are other embeddings of compact Riemann manifolds into function spaces, in particular using the heat kernel in \cite{bbg} \cite{kk}. See also \cite[\S 1.1]{gr2}. In comparison, the horofunction compactification is simple and direct and applies to all metric spaces which are not necessarily Riemannian manifolds. It also establishes a connection between the metric and function properties of the space. 
 \end{rem}
 
 \begin{ex}
      When $X$ is the Euclidean space  $\R^n$ with the standard Euclidean metric, it can be shown that horofunctions are linear functions of the form $-\langle \cdot, \mathbf v\rangle$, where $\mathbf v\in \R^n$ is a unit vector, and this horofunction is the Busemenn function corresponding to the geodesic $t \mathbf v$, $t\in \R$. Consequently, the horofunction compactification $\overline{\R^n}^{hor}$ is homeomorphic to the geodesic (or visual) compactification of $\R^n$ which is obtained by adding the unit sphere at infinity.
 \end{ex}

 \begin{rem}[Horofunction compactification of certain spaces]
      When $X$ is a simply connected complete Riemannian manifold of nonpositve curvature, the compactification $\overline{X}^{hor}$ was also shown in \cite{bgs} to be homeomorphic to the geodesic compactification, whose boundary is the set of equivalence classes of geodesics.
      More generally, when $X$ is a complete CAT(0)-space, the same result holds  \cite[Theorem 8. 13]{bh}. In these cases, every horofunction is a {  Busemann function}. When $X$ is a symmetric space of noncompact type, horofunctions can be computed explicitly and they are related to the horospherical coordinates of $X$ with respect to parabolic subgroups and hence to functions in the harmonic analysis on symmetric spaces (see \cite{gjt} or \cite{ha}).

      If a metric space is not a CAT(0)-space, then it is in general difficult to determine its horofunction compactification. One special class of contractible metric spaces which are not  CAT(0)-spaces are $\R^n$ with polyhedral norms. Horofunctions of $\R^n$ with respect to polyhedral norms were computed in \cite{kmn}, \cite{wa1} and \cite{js}. See the next section for more detail.
      
      The Hilbert metric on the interior of a  polytope $P$  in $\R^n$ also gives  a contractible metric space which is not a CAT(0)-space either. The horofunctions of such metric spaces were computed in \cite{wa3} and \cite{wa4}.
 \end{rem}

 Besides the application to the noncommutative geometry  mentioned in the introduction, another important application of horofunctions is that they give rise to horoballs of the boundary points. They are special subsets attached to points at infinity and  have various applications: 
 \begin{enumerate}
      \item The geometry of nonpositively curved manifolds, in particular the structure of the thin part and compactifications of nonpositively curved Riemannian manifolds of finite volume \cite{bgs}. This application was the motivation to study the horofunctions in \cite{bgs}.
      \item The dynamics of certain classes of nonlinear self-maps of convex cones   \cite{kmn}.
      \item The multiplicative ergodic theory and the law of large numbers for random walks \cite[Theorem 1.1]{kl}.
\end{enumerate}

\section{Horofunction compactification of polyhedral normed spaces}
% and proof of Theorem \ref{main}}

 In this section, we recall results on the horofunction compactification of $\R^n$ with respect to a polyhedral norm given in \cite{js}. Together with the characterization of the nonnegative part of the toric varieties in Proposition \ref{real-orbit}, it will be used to prove Theorem \ref{main}.

 Let $P$ be a convex polytope of $\R^n$ which contains the origin as an interior point.  Let $|\!|\cdot |\!|_P$ be the polyhedral norm on $\R^n$ whose unit ball is equal to $P$. This defines an asymmetric metric on $\R^n$ by
 \[
      d_P(\x, \y)=|\!|\y-\x |\!|_P.
 \]
 Since $|\!|\cdot |\!|_P$ is an asymmetric norm,  $d_P(\x, \y)\neq d_P(\y, \x)$ in general. 

 For each face $F$ of $P$, let $\sigma_F=\R_{\geq 0}\cdot F$ be the cone in $\R^n$ spanned by $F$,  and $V(F)$ the linear subspace spanned by $\sigma_F$. 
 Let 
 \[
      \Pi_F: \R^n \to V(F)
 \] 
 denote the orthogonal projection onto the subspace $V(F)$.

 The result  \cite[Theorem 3.10]{js} on the horofunction compactification of $(\R^n, d_P(\cdot, \cdot))$ can be stated as follows.

 \begin{prop}\label{horo}
      Let $|\!|\cdot |\!|_P$ be a polyhedral norm on $\R^n$  as above. Then an unbounded sequence of points $\x_j$ in $\R^n$ converges to a boundary point in the horofunction compactification $\overline{\R^n}^{hor}$ of $(\R^n, d_P(\cdot, \cdot))$ if and only if there exists a proper face $F$ of $P$ and a point $\x_\infty \in \R^n / V(F)$  such that the following conditions are satisfied:
      \begin{enumerate}
	  \item When $j\gg 1$, $\Pi_F(\x_j)$ is contained in the cone $\sigma_F$.
	  \item Let $\partial_{\mathrm{rel}}\sigma_F$ be the relative boundary of the cone $\sigma_F$, i.e., the boundary points of $\sigma_F$ in $V(F)$. Then the distance $d(\Pi_F(\x_j), \partial_{\mathrm{rel}}\sigma_F)\to +\infty$ as $j\to +\infty$.
	  \item $\x_j \to \x_\infty$ in $\R^n / V(F)$ as $j\to +\infty$.  %\mnote{is the third condition correct like that?}
      \end{enumerate}
 \end{prop}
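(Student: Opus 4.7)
My plan is to exploit the piecewise linear structure of the polyhedral norm. Let $F_1, \ldots, F_k$ be the facets of $P$ with outward normals $u_1, \ldots, u_k$ normalized so that $\langle u_i, w\rangle = 1$ for $w \in F_i$, so that $\|v\|_P = \max_i \langle u_i, v\rangle$. For a face $F$ of $P$, set $S_F = \{i : F \subseteq F_i\}$; on $\sigma_F$ the norm equals $\langle u_i, \cdot\rangle$ for every $i \in S_F$, and since each such $u_i$ satisfies $\langle u_i, \lambda w\rangle = \lambda$ for $w \in F$, the functionals $u_i$ with $i \in S_F$ restrict to one common linear functional $u_* : V(F) \to \R$. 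Choosing $x_0 = 0$ as basepoint, convergence of $x_j$ in $\overline{\R^n}^{hor}$ is equivalent to locally uniform convergence of $\phi_j(z) = \|x_j - z\|_P - \|x_j\|_P$.

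For the sufficiency direction I would first establish the key identity
\[
\|x\|_P \;=\; \|\Pi_F(x)\|_P + \max_{i \in S_F} \langle u_i, x - \Pi_F(x)\rangle,
\]
valid whenever $\Pi_F(x) \in \sigma_F$ is sufficiently deep (depending on the size of $x - \Pi_F(x)$), because the gap $\|\Pi_F(x)\|_P - \langle u_i, \Pi_F(x)\rangle$ for $i \notin S_F$ is bounded below by a positive multiple of $d(\Pi_F(x), \partial_{\mathrm{rel}}\sigma_F)$ and eventually dominates any bounded contribution $\langle u_i, x - \Pi_F(x)\rangle$. Conditions (1)--(3) ensure this identity applies simultaneously to $x_j$ and to $x_j - z$ for $z$ in any prescribed compact set and $j$ large (since $\Pi_F(x_j - z) = \Pi_F(x_j) - \Pi_F(z)$ also lies deep in $\sigma_F$ and the complementary part stays bounded). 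The leading $\|\Pi_F(x_j)\|_P$ then cancels in $\phi_j(z)$, and using $\|\Pi_F(x_j) - \Pi_F(z)\|_P = \|\Pi_F(x_j)\|_P - \langle u_*, \Pi_F(z)\rangle$, one is left with
\[
\phi_j(z) \to -\langle u_*, \Pi_F(z)\rangle + \max_{i \in S_F}\langle u_i, v_\infty - (z - \Pi_F(z))\rangle - \max_{i \in S_F}\langle u_i, v_\infty\rangle,
\]
where $v_\infty \in V(F)^\perp$ is the limit of $x_j - \Pi_F(x_j)$ (a representative of $x_\infty$). This exhibits the limit as an explicit horofunction depending only on $F$ and $x_\infty$.

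For the converse, suppose $\phi_j$ converges and $x_j \to \infty$. Since $\Sigma_P$ has only finitely many cones, pass to a subsequence on which $M := \{i : \langle u_i, x_j\rangle = \|x_j\|_P\}$ is constant; then $M = S_F$ for the unique face $F$ equal to the intersection of the facets indexed by $M$, and automatically $\Pi_F(x_j) \in \sigma_F$. Condition (2) is then forced by the minimality of $M$: if $d(\Pi_F(x_j), \partial_{\mathrm{rel}}\sigma_F)$ failed to tend to $\infty$, a further subsequence would put $\Pi_F(x_j)$ within bounded distance of a proper subcone $\sigma_{F'}$ with $F' \subsetneq F$, and additional facet normals would eventually tie for the maximum, contradicting stabilization. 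If $x_j - \Pi_F(x_j)$ were unbounded in $V(F)^\perp$, the max-expression in $\phi_j(z)$ would fail to stabilize at a suitable test point, contradicting convergence; boundedness then yields a further subsequence along which $x_j - \Pi_F(x_j)$ converges, giving $x_\infty$. The main technical obstacle is precisely this converse, specifically forcing both the deep-interior condition on $\Pi_F(x_j)$ and the boundedness of $x_j - \Pi_F(x_j)$ out of the mere convergence of $\phi_j$; the full argument is carried out in \cite[Theorem 3.10]{js}.
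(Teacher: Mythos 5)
The paper does not prove Proposition~\ref{horo} at all; it cites \cite[Theorem~3.10]{js} and uses the statement as a black box, so there is no internal proof to compare against. Judged on its own merits, your forward direction is sound: the identity $\norm_P = \max_i \langle u_i,\cdot\rangle$, the observation that the functionals $u_i$ with $i\in S_F$ agree on $V(F)$, and the lower bound on the gap $\|\Pi_F(x)\|_P - \langle u_i,\Pi_F(x)\rangle$ for $i\notin S_F$ in terms of $d(\Pi_F(x),\partial_{\mathrm{rel}}\sigma_F)$ (a Farkas-type bound, since $\norm_P-\langle u_i,\cdot\rangle$ is a nonnegative, nonzero linear form on $\sigma_F$) together give the splitting and the explicit limit you write down; it matches $h_{E,\x_\infty}$ for $E$ the dual face of $F$.

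The converse, however, contains a genuine error that is not merely a matter of detail. Choosing $F$ as the face whose facets index the stabilized tie-set $M=\{i:\langle u_i,x_j\rangle = \|x_j\|_P\}$ does not in general produce a face satisfying conditions (1)--(3). Concretely, take $P=[-1,1]^2$ and $x_j=(j+1,\,j-1)$. Then $M=\{1\}$ for all $j$, so your $F$ is the right edge $F_1$, $\sigma_{F_1}=\{x_1\geq|x_2|\}$, and $\Pi_{F_1}(x_j)=x_j$. But $d(x_j,\partial_{\mathrm{rel}}\sigma_{F_1})=\sqrt{2}$ for all $j$, so condition (2) fails, yet $\phi_j$ does converge (to $\max(-z_1,\,-z_2-2)$). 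The face the proposition actually requires here is the vertex $(1,1)$, with $\x_\infty$ the class of $(1,-1)$ in $\R^2/V(F)$. Your claimed mechanism for forcing (2) --- that staying within bounded distance of a smaller subcone would cause extra facet normals to tie for the maximum --- is false: in this example $\langle u_2,x_j\rangle = j-1$ never ties with $\langle u_1,x_j\rangle = j+1$. Relatedly, in your setup $x_j\in\sigma_F\subset V(F)$, so $x_j-\Pi_F(x_j)\equiv 0$ and your argument can only ever produce $\x_\infty=0$; it structurally cannot recover horofunctions with nontrivial $\x_\infty$. The actual converse requires identifying the face from the limiting horofunction itself (e.g.\ via its piecewise-linear/subdifferential structure, or equivalently the unique face $E$ of $P^\circ$ with $h=h_{E,\p}$), not from the tie-set of the sequence, and this is precisely what \cite[Theorem~3.10]{js} carries out. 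You are right that the converse is the technical crux, but the specific route you sketch for it does not close.
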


 The horofunctions of $(\R^n, d_P(\cdot, \cdot))$  can be described explicitly  in terms of convergent sequences in the above proposition.

 Recall that  $P^\circ$ is the polar set of $P$, which is also a polytope containing the origin as an interior point. For a convex set $C \subset (\R^n)^*$ we can define a {\em pseudo-norm}:
 \[
      \lvert x \rvert_C \coloneqq -\inf_{q \in C}\langle q, x \rangle.
\]

 Using this pseudo-norm, we define functions for each face $E \subset P^\circ$ and $\p \in \R^n$:
 \begin{align*}
      h_{E,\p}: \R^n &\lora \R,\\
      \y &\longmapsto \lvert \p-\y \rvert_E - \lvert \p \rvert_E.   
 \end{align*}

 \begin{lem}[\cite{js}, Lemma 2.16] \label{h_orthogonal}
      Let $F$ be the face of $P$ dual to the face $E$ of $P^\circ$. Then the function $h_{E, \p}$ only depends on the image of $\p$ in $\R^n/ V(F)$.
 \end{lem}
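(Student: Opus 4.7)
The plan is to show that for any $v \in V(F)$ and any $\p \in \R^n$, the equality $h_{E, \p + v} = h_{E, \p}$ holds pointwise on $\R^n$; since $V(F)$ is a linear subspace, this is equivalent to the stated claim that $h_{E,\p}$ factors through the quotient $\R^n/V(F)$.

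The key observation driving the argument is that for any $v \in V(F)$ the linear functional $q \mapsto \langle q, v\rangle$ is constant on the dual face $E \subset P^\circ$. This uses Proposition \ref{duality}: every $f \in F$ satisfies $\langle q, f\rangle = -1$ for all $q \in E$. Since $V(F)$ is the $\R$-linear span of $\sigma_F$, and hence of $F$, any $v \in V(F)$ can be written as $v = \sum_i \lambda_i f_i$ with $f_i \in F$, which gives
\[
\langle q, v\rangle = \sum_i \lambda_i \langle q, f_i\rangle = -\sum_i \lambda_i,
\]
a quantity manifestly independent of the choice of $q \in E$. Denote this common value by $\ell(v)$; it defines a linear functional $\ell \colon V(F) \to \R$.

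Now plug into the definition $|x|_E = -\inf_{q \in E}\langle q, x\rangle$. For any $\y \in \R^n$,
\[
|\p + v - \y|_E = -\inf_{q \in E}\bigl(\langle q, \p - \y\rangle + \ell(v)\bigr) = |\p - \y|_E - \ell(v),
\]
and analogously $|\p + v|_E = |\p|_E - \ell(v)$. Subtracting these two identities, the $-\ell(v)$ contributions cancel and we conclude
\[
h_{E,\p+v}(\y) = |\p+v-\y|_E - |\p+v|_E = |\p-\y|_E - |\p|_E = h_{E,\p}(\y).
\]

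The only substantive step is establishing the constancy of $q \mapsto \langle q, v\rangle$ on $E$ for $v \in V(F)$; the rest is a short formal computation. This constancy is precisely the algebraic content of the $F \leftrightarrow E$ duality, namely that $E$ lies in the affine hyperplane on which every element of $V(F)$ acts as a fixed scalar. I expect no further obstacles.
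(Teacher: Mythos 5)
Your proof is correct, and it is the natural argument given the paper's setup: the heart of the matter is that the duality condition $\langle q, f\rangle = -1$ for $q \in E$, $f \in F$ forces every $v \in V(F) = \mathrm{Span}_\R(F)$ to pair with $E$ via a constant, so the infimum defining $\lvert\cdot\rvert_E$ shifts by that same constant under translation by $v$, and the shift cancels in the difference defining $h_{E,\p}$. The paper itself does not reprove this lemma but cites \cite[Lemma 2.16]{js}; your argument is exactly the computation one would expect there.

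One small remark worth keeping in mind when you write this up formally: the representation $v = \sum_i \lambda_i f_i$ with $f_i \in F$ is not unique, so the intermediate expression $-\sum_i \lambda_i$ is not a priori well-defined as a function of $v$ alone. This is harmless for the argument, since what you actually need (and prove) is that $q \mapsto \langle q, v\rangle$ is constant over $q \in E$; defining $\ell(v)$ to be this common value, rather than the ambiguous $-\sum_i \lambda_i$, avoids any appearance of circularity.
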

 
 Then the horofunction compactification of $\R^n$ with respect to the polyhedral norm $|\!|\cdot |\!|_P$ in Proposition \ref{horo} can be described more explicitly as follows. More details are also provided in \cite[Theorem 3.10]{js}. %\cite[Theorem 2.11]{js}.

 \begin{prop}
      Under the conditions and notations in Proposition \ref{horo}, let $E$ be the face of the polar set $P^\circ$ which is dual to $F$, and $\x_j$ a convergent sequence  in $\overline{\R^n}^{hor}$ defined there. 
      Then $\x_j$ converges to the function $h_{E, \x_\infty}$, i.e., the function $d_P(\z, \x_j)- d_P(0, \x_j)$ converges to $h_{E, \x_\infty}(\z)$ uniformly over compact subsets of $\R^n$.
 \end{prop}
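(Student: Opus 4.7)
The plan starts with the polar duality
\[
\|\x\|_P = -\inf_{q \in P^\circ} \langle q, \x\rangle = \lvert \x \rvert_{P^\circ},
\]
which follows from the definition of $P^\circ$ and the bipolar theorem, using that $P$ is a closed convex body with the origin in its interior. This rewrites the quantity to be analyzed as
\[
d_P(\z, \x_j) - d_P(0, \x_j) = \lvert \x_j - \z \rvert_{P^\circ} - \lvert \x_j \rvert_{P^\circ},
\]
while the target function is $h_{E, \x_\infty}(\z) = \lvert \x_\infty - \z \rvert_E - \lvert \x_\infty \rvert_E$. So the task reduces to showing that, uniformly over $\z$ in any compact $K \subset \R^n$, the infimum defining $\lvert \cdot \rvert_{P^\circ}$ is eventually attained on the face $E$, and then passing to the limit in $\x_j$.

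The central step is a face-selection principle. First observe that for any $y \in \sigma_F$, written as $y = \lambda f$ with $\lambda \geq 0$ and $f \in F$, every $q \in E$ satisfies $\langle q, y\rangle = -\lambda = -\|y\|_P$, so $E \subseteq \mathrm{argmin}_{q \in P^\circ}\langle q, y\rangle$ and hence $\lvert y \rvert_{P^\circ} = \lvert y \rvert_E$. For a perturbed vector $y = \Pi_F(y) + w$ with $\Pi_F(y) \in \sigma_F$ deep inside $\sigma_F$ and $w$ bounded, the same equality persists. Indeed, the argmin of a linear functional over a polytope is always a face, and $P^\circ$ has only finitely many faces; combined with compactness of $P^\circ$ this gives a quantitative stability: there exists $R_0 > 0$ (depending only on $P$ and the bound for $w$) such that once $d(\Pi_F(y), \partial_{\mathrm{rel}}\sigma_F) \geq R_0$, one has
\[
\langle q, y\rangle > \min_{q' \in E}\langle q', y\rangle \quad \text{for every } q \in P^\circ \setminus E.
\]
The strict inequality uses that for $q \notin E$ one has $\langle q, f\rangle > -1$ whenever $f$ lies in the relative interior of $F$, so the ``advantage'' of $E$-minimizers over any other $q \in P^\circ$ grows linearly with $d(\Pi_F(y), \partial_{\mathrm{rel}}\sigma_F)$, dominating the bounded contribution from $w$. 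Consequently $\mathrm{argmin}_{q \in P^\circ}\langle q, y\rangle \subseteq E$ and $\lvert y \rvert_{P^\circ} = \lvert y \rvert_E$.

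Given compact $K \subset \R^n$, the hypotheses of Proposition \ref{horo} guarantee $d(\Pi_F(\x_j), \partial_{\mathrm{rel}}\sigma_F) \to \infty$ and that $\x_j - \Pi_F(\x_j)$ is bounded (it converges in $\R^n/V(F)$); both properties transfer uniformly to $\x_j - \z$ for $\z \in K$. The face-selection principle therefore provides $j_0 = j_0(K)$ such that for $j \geq j_0$ and $\z \in K$,
\[
d_P(\z, \x_j) - d_P(0, \x_j) = \lvert \x_j - \z \rvert_E - \lvert \x_j \rvert_E = h_{E, \x_j}(\z).
\]
By Lemma \ref{h_orthogonal}, $h_{E, \x_j}$ depends on $\x_j$ only through its class $[\x_j] \in \R^n/V(F)$, and $(\p, \z) \mapsto h_{E, \p}(\z)$ is jointly continuous (indeed $1$-Lipschitz in each variable with respect to $\lvert \cdot \rvert_E$). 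Since $[\x_j] \to \x_\infty$ in $\R^n/V(F)$ by hypothesis, we conclude $h_{E, \x_j}(\z) \to h_{E, \x_\infty}(\z)$ uniformly on $K$, finishing the proof.

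The main obstacle is the quantitative face-selection principle: one must verify that the argmin face of $\langle \cdot, y\rangle$ over $P^\circ$ stays in $E$ under bounded perturbations of a vector whose leading-order part lies deep in $\sigma_F$. This is a uniform convex-analytic estimate resting on the compactness of $P^\circ$, the finite combinatorics of its face lattice, and the linear-in-depth growth of the ``advantage'' of $E$-minimizers over the rest of $P^\circ$.
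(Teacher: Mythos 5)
The paper itself gives no proof of this proposition: after stating the key identity $|\!|\cdot|\!|_P = \lvert\cdot\rvert_{P^\circ}$, it defers the entire computation to Theorem 3.10 of \cite{js}, which in turn goes through Walsh's general characterization of horofunctions of normed spaces. Your write-up is therefore not a reproduction of a proof in the paper but a self-contained, direct convex-geometric alternative, and its overall architecture (rewrite $d_P(\z,\x_j)-d_P(0,\x_j)$ via polar duality, show that the infimum defining $\lvert\cdot\rvert_{P^\circ}$ is eventually computed on the face $E$, then pass to the limit using Lemma \ref{h_orthogonal} and joint Lipschitz continuity of $h_{E,\p}(\z)$) is sound and matches the sketch the paper gives. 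The transfer of the hypotheses of Proposition \ref{horo} from $\x_j$ to $\x_j-\z$ for $\z$ in a compact set is also handled correctly.

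The gap, which you flag yourself, is the face-selection principle, and I do not think the argument you sketch for it closes. You claim that for every $q\in P^\circ\setminus E$ the ``advantage'' $\langle q,\Pi_F(y)\rangle + |\!|\Pi_F(y)|\!|_P$ grows linearly in $d(\Pi_F(y),\partial_{\mathrm{rel}}\sigma_F)$ and so dominates the bounded perturbation coming from $w$. But that advantage is not bounded below uniformly in $q$: it tends to $0$ as $q$ approaches $E$ from outside, for any fixed depth, so a uniform-in-$q$ strict inequality does not follow directly from linear growth. (The perturbation term $\langle q,w\rangle-\min_E\langle\cdot,w\rangle$ also degenerates as $q\to E$, so the conclusion may still hold, but you must compare the two rates, which the compactness-and-finite-combinatorics remark does not do.) A cleaner route is through the normal fan: the argmin face of $\langle\cdot,y\rangle$ on $P^\circ$ is $(F')^\circ$, where $F'$ is the face of $P$ with $y\in\mathrm{relint}\,\sigma_{F'}$, so ``$\mathrm{argmin}\subseteq E$'' is precisely the statement that $y=\Pi_F(y)+w$ lies in the star $\bigcup_{F'\supseteq F}\sigma_{F'}$. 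That this holds once the depth $d(\Pi_F(y),\partial_{\mathrm{rel}}\sigma_F)$ dominates $\|w\|$ is true, but it is a cone-geometry lemma about the fan $\Sigma_P$ (provable by homogeneity plus a compactness argument on the unit sphere, showing $d(p,\bigcup_{F''\not\supseteq F}\sigma_{F''})\geq c\, d(p,\partial_{\mathrm{rel}}\sigma_F)$ for $p\in\mathrm{relint}\,\sigma_F$), and it needs to be stated and proved rather than inferred from finiteness of the face lattice of $P^\circ$.
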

 
 Note that $h_{E, \x_\infty}$ is well-defined by Lemma \ref{h_orthogonal}.  The paper \cite{js} relies on the computation of horofunctions of normed vector spaces $\R^n$ in \cite{wa1} (see also \cite[Theorem 3.6]{js}). One basic ingredient is  that the pseudo-norm with respect to $P^\circ$ is the polyhedral norm with respect to $P$: 
 \[
      \lvert \cdot \rvert_{P^\circ} = \norm_P.
 \] 
 Therefore $d_P(\z, \x_j)- d_P(0, \x_j) = \lvert \x_j - \z \rvert_{P^\circ} - \lvert \x_j \rvert_{P^\circ} = h_{P^\circ, \x_j}$ and so it is reasonable to expect that when a sequence of points $\x_j$ in $\R^n$ goes to the boundary (or infinity)  as in  Proposition \ref{horo}, the limit of $d_P(\z, \x_j)- d_P(0, \x_j)$ is related to $h_{E, \x_\infty}$ for some face  $E$ of $P^\circ$. The detailed computation is  crucial to Proposition \ref{horo} and worked out in the proof of Theorem 3.10 in \cite{js}.

 \begin{rem}
      Horofunctions of $\R^n$ with respect to polyhedral norms are also computed in \cite[Theorem 4.2]{kmn}, or more precisely, horoballs are computed there. They are not expressed in terms of the $h_{E, \x}$ described above. 
 \end{rem}

 \begin{proof}[Proof of Theorem \ref{main}]\ \\
      Let $P$ be a rational polytope containing the origin as an interior point.  Let $\Sigma=\Sigma_P$ be the fan obtained by taking cones over the faces of $P$. 
      By Propositions \ref{real-orbit} and \ref{horo}, an unbounded sequence  of $\R^n$ converges to a boundary point in the compactification $\overline{\R^n}_\Sigma$  if and only if it converges in the horofunction compactification  $\overline{\R^n}^{hor}$ with respect to the polyhedral norm $|\!| \cdot |\!|_P$ or rather the metric $d_P$.  Therefore, the two compactifications  $\overline{\R^n}_\Sigma$ and $\overline{\R^n}^{hor}$  of $\R^n$ are homeomorphic.
      Then  Proposition  \ref{real-orbit} again  implies that the nonnegative part $X_{\Sigma, \geq 0}$  of the toric variety is homeomorphic to the horofunction compactification  $\overline{\R^n}^{hor}$. 
 \end{proof}
 
 \begin{rem}\label{norbert}
      Given the one-to-one correspondence between the toric varieties and rational polyhedral norms in Theorem \ref{main} and the fact that each polytope $P$ also determines a Hilbert metric $d_H(\cdot, \cdot)$ on the  interior $\text{int}(P)$ of $P$, one natural question is whether there exists a similar relation between $X_{\Sigma_P, \geq 0}$ and the horofunction compactification of $(\text{int}(P), d_H(\cdot, \cdot))$. The results in \cite{wa4} and \cite{wa5} show that besides the Hilbert metric, Funk metric and reverse Funk metric should also be considered, and  that the horofunction compactifications of the Funk metric seems to be related to  $X_{\Sigma_{P^\circ}, \geq 0}$, the toric variety associated to the polar set $P^0$, and the horofunction compactification of the Hilbert metric is more complicated. 
      This question will be treated elsewhere.
\end{rem}

 \noindent{\em Acknowledgments:}
 We would like to thank Norbert A'Campo for his interest in the problem considered in this paper and raising Question \ref{norbert} about the Hilbert geometry.

\end{document}